\setlist{itemsep=0em,topsep=.3em}
\begin{document}

\title{Bias and dessins}
\author{Jonathan Fine}
\date{21 June 2015 (with Postscript 14 February 2018)}

\maketitle


\section{Introduction}

\aaa[Abstract.]
Grothendieck's theory of dessins provides a bridge between algebraic
numbers and combinatorics.  This paper adds a new concept, called
\emph{bias}, to the bridge.  This produces: (i)~from a biased plane
tree the construction of a sequence of algebraic numbers, and (ii)~a
Galois invariant lattice structure on the set of biased dessins.  Bias
brings these benefits by (i)~using individual polynomials instead of
equivalence classes of polynomials, and (ii)~applying properties
of covering spaces and the fundamental group.  The new features give
new opportunities.

At the 2014 SIGMAP conference the author spoke~\cite{jfine-dlbd-talk}
on \emph{The decorated lattice of biased dessins}.  This decorated
lattice $\mathcal{L}$ is combinatorially defined, and its automorphism
group contains the absolute Galois group $\Gamma$, perhaps as an
index~$6$ subgroup.

This paper defines new families of invariants of dessins, although
they require further work to be understood and useful.  For this,
$\mathcal{L}$ is vital.  This paper relies on the the existing,
unbiased, theory.  Also, it only sketches the construction of
$\mathcal{L}$.  In \cite{jfine-rpt-an, jfine-dlbd} the author will
remove this dependency, develop the biased theory further, with a
focus on $\Gamma$, and make the theory more accessible.  [[\emph{The
    Postscript (page~15) should be read next. This paper is otherwise
    unchanged.}]]

\aaa[Advice to the reader.]
This paper is a compromise.  Either directly or in the background it
involves algebraic numbers, algebraic geometry, analysis,
combinatorics, Galois theory and topology.  What assumptions to make
of the reader?  For example, the Galois invariance of the lattice
structure (Theorem~\ref{thm-ls-gi}) will be obvious to some readers,
and mysterious to others. The paper assumes only what is required to
achieve its limited goal.

This goal is to show that the addition of bias greatly improves the
existing theory of dessins.  Central to dessins is the bijection given
by the bridge between algebraic numbers and combinatorics.
Theorem~\ref{thm-bt-bij} gives a bridge which carries bias.  Given the
stated analogous unbiased result, its proof should be accessible to
all readers.  This gives (see~\S\ref{s-sp-gi}) many new Galois
invariants for biased Shabat polynomials.

The join operation on biased dessins is new.  Its combinatorial
description (see Definition~\ref{dfn-lat-join}) is simple and
attractive.  It is also Galois invariant, which we prove
elsewhere~\cite{jfine-dlbd}.  It gives a powerful method
(see~\S\ref{s-cnc-tgi}) of producing new Galois invariants of biased
dessins from old. In \S\ref{s-bd-cs-pi} to \S\ref{s-bd-dl} we motivate
and sketch the definition of the decorated lattice $\mathcal{L}$ of
biased dessins.  This provides the ground for the definition of
further new Galois invariants of dessins.

Even when studying unbiased objects, use of bias is a great help (see
\S\ref{s-cnc-ubgi}).  The author will in~\cite{jfine-rpt-an,jfine-dlbd}
present the theory of dessins anew, but with bias introduced from the
very beginning, rather than as an afterthought (the present paper's
approach).  Further, the focus will be on the absolute Galois group,
and on making the theory more widely accessible.  Until then, there is
the present paper, with its limited goal.

In the rest of this section we give the basic concepts on which most
of this paper is based.  Sections \S\ref{s-bd-cs-pi}--\ref{s-bd-dl}
require further background.

\aaa[\label{s-i-an}Algebraic numbers.]
We let $\mathbb{Q}\subset\mathbb{C}$ denote the rational and complex
numbers.  Let $f(z)$ be a polynomial in $z$, with rational
coefficients.  If $f(u) = 0$ we say that $u$ is an \emph{algebraic
  number} (provided $u\in\mathbb{C}$ and $f$ is not constant).  The
algebraic numbers form a field, $\Qbar$, lying between $\mathbb{Q}$
and $\mathbb{C}$.

We let $\mathbb{Q}[z] \subset \Qbar[z] \subset \mathbb{C}[z]$ denote
polynomials with coefficients in
$\mathbb{Q}\subset\Qbar\subset\mathbb{C}$ respectively. By the
fundamental theorem of algebra (a topological result proved by Gauss),
the field $\mathbb{C}$ is \emph{algebraically closed}.  In other
words, any $f\in\mathbb{C}[z]$ has $n=\deg f$ roots, when counted with
multiplicity.  The same is true of $\Qbar$, but this is an algebraic
result.

\begin{definition}
The \defemph{absolute Galois group} $\Gamma$ consists of all field
automorphisms of\/ $\Qbar$.
\end{definition}

We need some simple results about $\Gamma$.  We use
$u\mapsto\tilde{u}$ to denote an element of $\Gamma$. Always,
$\tilde{u}=u$ for $u\in\mathbb{Q}$.  By acting on coefficients this
induces a map $f\mapsto\tilde{f}$ on $\Qbar[z]$.  Because
$u\mapsto\tilde{u}$ is a field automorphism, it follows that
$\tilde{f}(\tilde{u}) = \tilde{v}$, where $v=f(u)$.  Similarly, for
derivatives.  The expression $\tilde{f}'$ can be evaluated in two
ways: first apply $u\mapsto\tilde{u}$ and then the derivative, or vice
versa.  Both give the same result, which we denote by $\tilde{f}'$.

The inclusion $\Qbar\subset\mathbb{C}$ induces a topology on $\Qbar$.
Note that $u\mapsto\tilde{u}$ in $\Gamma$ is not continuous for this
topology, unless it is either the identity map $u\mapsto u$ or complex
conjugation $u\mapsto\bar{u}$.

\aaa[Galois invariants and the minimal polynomial.]
We are interested in Galois invariants of trees and dessins, and we
would like a complete set of such invariants.  The minimal polynomial
is a basic example of a complete Galois invariant.

Let $a\in\Qbar$ be an algebraic number.  Of all non-zero
$f\in\mathbb{Q}[z]$ such that $f(a)=0$ there is only one that (i)~has
least degree, and (ii)~has top-degree coefficient~$1$.  This is called
the \emph{minimal polynomial $g_a(z)\in\mathbb{Q}[z]$} of $a$.

Suppose $b=\tilde{a}$ for some $u\mapsto\tilde{u}$ in $\Gamma$.  It is
easily proved that $g_a = g_b$.  Put another way, the minimal
polynomial $g_a$ is a \emph{Galois invariant} of $a\in\Qbar$.  Now
suppose $g_a=g_b$.  Does it follow that there is a $u\mapsto\tilde{u}$
in $\Gamma$, such that $b=\tilde{a}$.  If so, then we say that the
minimal polynomial is a \emph{complete} Galois invariant.  For use in
Proposition~\ref{prp-ubt-to-bt}, note that $f\mapsto f'$ for
$f\in\Qbar[z]$ is an example of something that is \emph{Galois
  covariant}.  Equivalently, the truth of the statement ``the
derivative of $f$ is $g$'' is Galois invariant (for $f, g\in\Qbar[z]$).

\begin{proposition}
\label{prp-mp-cgi}
Suppose $a\in\Qbar$.  Then the minimal polynomial
$g_a(z)\in\mathbb{Q}[z]$ is a complete Galois invariant of $a$.
\end{proposition}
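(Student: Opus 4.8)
The plan is to prove the two implications separately. The invariance direction---if $b=\tilde a$ for some $u\mapsto\tilde u$ in $\Gamma$ then $g_a=g_b$---is the easy one already flagged in the text: applying the automorphism to $g_a(a)=0$ and using that $g_a$ has rational coefficients (hence is fixed coefficientwise by every element of $\Gamma$) gives $g_a(b)=0$, so $g_b \mid g_a$; the same argument applied to the inverse automorphism, which carries $b$ to $a$, gives $g_a \mid g_b$, and since both are monic they coincide. The substance of the proposition is the converse, completeness: assuming $g_a=g_b$, I must produce an element of $\Gamma$ carrying $a$ to $b$.

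First I would record that the minimal polynomial $g:=g_a=g_b$ is irreducible over $\mathbb{Q}$; this is standard, since a nontrivial factorization would yield a factor of smaller degree vanishing at $a$, contradicting minimality. Consequently the ideal $(g)\subset\mathbb{Q}[z]$ is maximal and $\mathbb{Q}[z]/(g)$ is a field. Evaluation at $a$ is a ring homomorphism $\mathbb{Q}[z]\to\mathbb{Q}(a)$ with kernel exactly $(g)$ and image $\mathbb{Q}(a)$, so it descends to an isomorphism $\mathbb{Q}[z]/(g)\to\mathbb{Q}(a)$ sending the class of $z$ to $a$; the same construction with $b$ gives an isomorphism $\mathbb{Q}[z]/(g)\to\mathbb{Q}(b)$ sending the class of $z$ to $b$. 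Composing one with the inverse of the other yields a field isomorphism $\varphi\colon\mathbb{Q}(a)\to\mathbb{Q}(b)$ that fixes $\mathbb{Q}$ pointwise and satisfies $\varphi(a)=b$.

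The remaining, and main, step is to extend $\varphi$ to a field automorphism of all of $\Qbar$. Since $\Qbar$ is algebraic over $\mathbb{Q}$, it is algebraic over each of the intermediate fields $\mathbb{Q}(a)$ and $\mathbb{Q}(b)$, and it is algebraically closed; thus $\Qbar$ is an algebraic closure of both. By the isomorphism extension theorem, an isomorphism between two subfields of $\Qbar$ whose algebraic closures are $\Qbar$ extends to an automorphism of $\Qbar$. Applying this to $\varphi$ produces $u\mapsto\tilde u$ in $\Gamma$ with $\tilde a=b$, as required.

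I expect the extension step to be the only delicate point, as it is not elementary. One considers the poset of pairs $(K,\psi)$ with $\mathbb{Q}(a)\subseteq K\subseteq\Qbar$ and $\psi\colon K\to\Qbar$ extending $\varphi$, ordered by extension, and invokes Zorn's lemma to obtain a maximal element $(K,\psi)$. Maximality, together with the facts that every element of $\Qbar$ is algebraic over $K$ and that $\Qbar$ is algebraically closed, forces $K=\Qbar$ and forces $\psi$ to be surjective, so $\psi$ is the desired automorphism. Everything preceding this is routine; the one genuine input is the use of Zorn's lemma (equivalently, the axiom of choice) to pass from the isomorphism of the two simple extensions to an automorphism of the whole of $\Qbar$.
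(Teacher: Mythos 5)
Your proof is correct and is the standard argument; the paper itself gives no proof of this proposition, only the remark that completeness ``states that certain incomplete automorphisms of $\Qbar$ can be indefinitely extended,'' which is precisely the isomorphism-extension (Zorn's lemma) step you identify as the one genuine input. The easy direction, the passage through $\mathbb{Q}[z]/(g)$, and the extension to all of $\Qbar$ are all as they should be.
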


The completeness of the minimal polynomial is a fundamental property
of the absolute Galois group.  It states that certain incomplete
automorphisms of $\Qbar$ can be indefinitely extended.

\aaa[Critical points and values.]
Suppose $f:\mathbb{C}\to\mathbb{C}$ is a polynomial function.  If
$f'(u)=0$ for $u\in\mathbb{C}$ we say that $u$ is a \emph{critical
  point} of $f$, and that $v=f(u)$ is a \emph{critical value}.  For
each $u\in\mathbb{C}$ let $v=f(u)$ and consider the polynomial
equation $f(z)-v=0$.  Writing
\[
f(z) = v + a_1(z-u) + a_2(z-u)^2 + \ldots + a_n(z-u)^n
\]
we see that $z=u$ is a simple root of $f(z)-v=0$ if and only if
$f'(u)\neq 0$.

Thus, provided $v\in\mathbb{C}$ is not a critical value of
$f:\mathbb{C}\to\mathbb{C}$, the fibre $f^{-1}(v)$ of $f$ consists of
$n$ distinct points, at each of which $f'$ is non-zero.  Using the
language of topology (\S\ref{s-bd-cs-pi}) we have that
$f:\mathbb{C}\to\mathbb{C}$ is a covering map away from the critical
values.

\aaa[Bipartite plane trees.]
The reader will need enough combinatorics to understand the following
result, which we will explain. Figure~\ref{fig-bpt-bpt}b shows a
bipartite plane tree. (By the way, Figure~\ref{fig-bpt-bpt}a is a
biased plane tree.)

\begin{proposition}
\label{prp-bpt-spp}
A bipartite plane tree is equivalent to an irreducible pair of
permutations such that $\alpha\beta$ has at most one orbit.
\end{proposition}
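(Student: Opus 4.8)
The plan is to prove the stated equivalence through the standard rotation-system (monodromy) dictionary between oriented bipartite maps and pairs of permutations, and then to extract the tree condition from a face count. Fix a bipartite plane tree and let $S$ be its set of edges. Because the tree is drawn in the oriented plane, the edges incident to each vertex inherit a cyclic order; I would define $\alpha$ to be the permutation of $S$ that carries each edge to the next one counterclockwise about its black endpoint, and $\beta$ the analogous permutation about the white endpoints. The cycles of $\alpha$ are then exactly the black vertices and the cycles of $\beta$ the white vertices, while the cycles of the product $\alpha\beta$ will turn out to be the faces of the embedding. Since each edge joins a black vertex to a white one this recipe is well defined, and the inverse recipe rebuilds the ribbon structure from any pair, so the two constructions are mutually inverse up to relabelling $S$, i.e. up to simultaneous conjugation of $(\alpha,\beta)$.

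First I would dispatch the irreducibility clause, which is purely graph-theoretic. A subset of $S$ invariant under both $\alpha$ and $\beta$ is a set of edges closed under the cyclic orders at both ends, hence a union of whole connected components of the tree; thus $\langle\alpha,\beta\rangle$ acts transitively on $S$ exactly when the tree is connected. Transitivity of the generated group is precisely irreducibility of the pair, so connected bipartite plane graphs correspond to irreducible pairs.

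The substance of the proposition is the passage from ``tree'' to ``$\alpha\beta$ has (at most) one orbit.'' Here I would use that the orbits of $\alpha\beta$ are in bijection with the faces of the embedding, equivalently with the components of the complement of the tree in the sphere. A tree does not separate the plane, so its complement is connected and there is a single face; conversely Euler's formula $V-E+F=2$ for a connected planar map shows that a single face $(F=1)$ forces $E=V-1$, which is exactly the condition for a connected graph to be a tree. Since $S$ is nonempty once the tree has an edge, every permutation of $S$ has at least one orbit, so ``at most one orbit'' here means ``exactly one face,'' the empty (edgeless) case corresponding to the degenerate single-vertex tree.

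I expect the main obstacle to be making the face--orbit identification precise and, with it, keeping track of the planarity that is tacit in the word ``plane.'' The clean slogan ``one orbit of $\alpha\beta$ is the same as being a tree'' is true only because we embed in the sphere: on a higher-genus surface a single-faced map need not be acyclic, so genus zero is doing real work in the backward direction. Accordingly I would spend most of the effort tracing how following $\alpha\beta$ walks once around the angular sectors on one side of the edges and closes up exactly one sector-cycle per face, and then let Euler's formula convert ``single face'' into the combinatorial tree condition $E=V-1$.
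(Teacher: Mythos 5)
Your route is genuinely different from the paper's: the paper cuts the plane along the tree to obtain a disc with $2n$ boundary arcs and then argues by induction, stripping off a pendant edge $e$ with $\alpha(e)=e$ or $\beta(e)=e$; you instead set up the rotation-system dictionary and try to close the converse with a face count and Euler's formula. Your forward direction (tree $\Rightarrow$ irreducible pair with $\alpha\beta$ a single cycle) is fine and agrees with the paper's ``two-step walk around the tree.''

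The gap is in the converse, and it sits exactly where you suspected. You apply Euler's formula ``$V-E+F=2$ for a connected planar map,'' but when the input is an abstract irreducible pair, the map is only built on \emph{some} closed oriented surface, and the relation that actually holds is $c(\alpha)+c(\beta)-n+c(\alpha\beta)=2-2g$. Transitivity together with $c(\alpha\beta)=1$ does not force $g=0$. Concretely, take $\alpha=\beta=(1\,2\,3)$: the pair is irreducible and $\alpha\beta=(1\,3\,2)$ has a single orbit, yet $c(\alpha)+c(\beta)=2$, so the associated map has two vertices, three edges, one face and genus one --- a one-faced torus map, not a plane tree. You flagged that ``genus zero is doing real work in the backward direction,'' but that work cannot be extracted from the stated hypotheses; one must add a genus-zero condition such as $c(\alpha)+c(\beta)=n+1$ (equivalently $V=E+1$), after which the single-face property becomes a consequence rather than the hypothesis. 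Note that the same example also blocks step~(i) of the paper's own deferred argument: neither $\alpha$ nor $\beta$ fixes any edge, so no pendant edge exists to remove. So the obstruction you met is not a defect of your method alone; as stated, the hypothesis of the proposition needs strengthening before either proof can close.
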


First, a word about equality.  We will say that two combinatorial
objects are equal if the one can be transformed into the other by
relabelling.  Thus, we are implicitly talking about equivalence
classes of labelled objects.  For example, any two graphs that have
only one vertex (and hence no edges) are equal, i.e.~belong to the
same equivalence class.

In this paper: (1)~A \emph{graph} $G$ is a set $V=V_G$ of
\emph{vertices} together with the \emph{edges} $E=E_G$, a set of
unordered pairs of vertices.  (2)~All graphs, trees and dessins will
have a finite number of vertices and edges.  (3)~A \emph{path} is a
sequence of edges of the form
$\{v_1,v_2\},\{v_2,v_3\},\ldots,\{v_{n},v_{n+1}\}$ such that the $v_i$
are distinct. (4)~A \emph{tree} is a graph where there is exactly one
path between any two distinct vertices. This condition allows the
no-vertex and one-vertex graphs as trees. (5)~A \emph{bipartite} graph
is one where (i)~$V_G$ is partitioned into two subsets, the
\emph{black} and \emph{white vertices}, and (ii)~each edge has a black
vertex and a white vertex. (6)~For consistency with $n=\deg f$, we let
$\deg X$ denote the number of edges in $X$, for $X$ a graph, tree or
(to be defined later) dessin.

In addition: (7)~The plane will always be $\mathbb{C}$, with its usual
counter-clockwise orientation.  (8)~A \emph{plane graph} will be a
graph that is drawn on the plane, with edges intersecting only at the
endpoints. (9)~Thus, a bipartite plane tree is (i)~a plane graph,
(ii)~with exactly one path between any two vertices, and (iii)~an
alternate black and white labelling of the vertices.

\begin{figure}[h]
\begin{centering}
Draft figures are at end of the document.\par
\end{centering}
\caption{(a)~A biased plane tree. (b)~The corresponding bipartite
  plane tree.}\label{fig-bpt-bpt}
\end{figure}

\begin{figure}[h]
\begin{centering}
Draft figures are at end of the document.\par
\end{centering}
\caption{(a)~The permutation $\alpha$. (b)~The permutation
  $\beta$. (c)~The permutation $\alpha\beta$.}\label{fig-perm-a-b-ab}
\end{figure}

\aaa[Pairs of permutations.]
First, a word about the figures.  Figure~\ref{fig-bpt-bpt}a shows a
biased plane tree $T$, and Figure~\ref{fig-bpt-bpt}b shows the
resulting bipartite plane tree $T'$ (which has at least one edge).
Figure~\ref{fig-perm-a-b-ab}a shows the permutation $\alpha$ on the
edges of $T'$ (and hence $T$), and Figure~\ref{fig-perm-a-b-ab}b the
permutation $\beta$.  Finally, Figure~\ref{fig-perm-a-b-ab}c show the
permutation $\alpha\beta$ on the edges of $T'$.  The key point of
Figure~\ref{fig-perm-a-b-ab}c is that $\alpha\beta$ is a
counterclockwise `two-step walk around' $T'$, which visits each side of each
edge exactly once.

Consider the edges in Figure~\ref{fig-bpt-bpt}b. Each edge $e$ has a
black vertex.  Rotating counterclockwise around that vertex we come to
another (or possibly the same) edge $e_1$.  We will write $e_1 =
\alpha(e)$.  Similarly, we define $\beta(e)$ by rotating
counterclockwise around the white vertex of
$e$. Figure~\ref{fig-perm-a-b-ab} parts (a) and (b) show $\alpha$ and
$\beta$ respectively.  Clearly, each bipartite plane tree $T$
determines a pair of permutations $(\alpha, \beta)$ on the edge set
$E=E_T$ of $T$.

Here's how the process can be reversed: (1)~A \emph{permutation} is a
bijection $\alpha:E\to E$ from a set to itself.  (2)~A \emph{pair of
  permutations} $P$ is an ordered pair $(\alpha_P, \beta_P)$ of
permutations of the same set $E=E_P$.  We call $E$ the \emph{edges} of
$P$. We require $E$ to be a finite set. (3)~We let $\Vb$ denote the
$\alpha$-orbits in $E$, and $\Vw$ the $\beta$-orbits. (4)~We let $V$
be the disjoint union of $\Vb$ and $\Vw$. We may need to relabel $\Vb$
or $\Vw$, for example when $E$ has only one element. (5)~Let $E'$ be
the pairs $\{\vb,\vw\}$ where $\vb$ and $\vw$ are orbits of the same
edge $e\in E$. (6)~We can, and will, identify $E$ and $E'$.  By
construction, there is at most one edge between two vertices.

This produces, from any pair of permutations $P$, (i)~a bipartite
graph $G_P$, together with (ii)~at each $v$ of $G_P$ a cyclic order on
the edges lying on that $v$.  Conversely, such data determines a pair
of permutations.  When is $G_P$ connected?  The reader is asked to
check:

\begin{notation}
$\langle\alpha,\beta\rangle$ is the group generated by $\alpha$ and
  $\beta$.
\end{notation}

\begin{definition}
A pair of permutations $P$ is \defemph{irreducible} if $E_P$ is either
empty or an orbit of $\langle\alpha_P,\beta_P\rangle$.
\end{definition}

\begin{proposition}
Let $P$ be a pair of permutations.  The graph $G_P$ is connected if
and only if $P$ is irreducible.
\end{proposition}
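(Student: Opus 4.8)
The plan is to show that the connected components of $G_P$ are in natural bijection with the orbits of $\langle\alpha,\beta\rangle$ on $E=E_P$; the Proposition then follows at once, since $G_P$ is connected exactly when it has a single component. Throughout, for $e\in E$ I write $\vb(e)$ for the $\alpha$-orbit of $e$ (a black vertex) and $\vw(e)$ for its $\beta$-orbit (a white vertex), so that the edge $e$ of $G_P$ joins $\vb(e)$ to $\vw(e)$. The empty case $E=\emptyset$ is immediate: $G_P$ is the no-vertex graph, which is connected, and $P$ is irreducible by definition, so I assume $E\neq\emptyset$ below.

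The heart of the argument is the claim that, for $e,e'\in E$, the edges $e$ and $e'$ lie in the same connected component of $G_P$ if and only if they lie in the same orbit of $\langle\alpha,\beta\rangle$. For the direction ``same orbit implies same component'' I would write $e'=g(e)$ with $g$ a word $s_m\cdots s_1$ in $\alpha^{\pm1}$ and $\beta^{\pm1}$, and form the partial products $e=e_0,e_1,\dots,e_m=e'$. Consecutive terms differ by a single $\alpha^{\pm1}$, so that $\vb(e_i)=\vb(e_{i+1})$ and $e_i,e_{i+1}$ meet at a common black vertex, or by a single $\beta^{\pm1}$, so that they meet at a common white vertex; either way they are adjacent edges of $G_P$, and the whole chain lies in one component.

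For the converse, ``same component implies same orbit,'' I would take a walk in $G_P$ from $\vb(e)$ to $\vb(e')$; its successive edges, bracketed by $e$ and $e'$, give a sequence $e=f_0,f_1,\dots,f_k=e'$ in which consecutive members share a vertex. Such a shared vertex is either black, forcing $f_i$ and $f_{i+1}$ into a common $\alpha$-orbit and hence $f_{i+1}=\alpha^{m}(f_i)$ for some $m$, or white, giving $f_{i+1}=\beta^{m}(f_i)$. Composing these relations shows $e'\in\langle\alpha,\beta\rangle\cdot e$, which proves the claim.

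Finally I would assemble the Proposition from the claim. Because every orbit is nonempty, each vertex of $G_P$, being an $\alpha$- or $\beta$-orbit, is incident to at least one edge, so $G_P$ has no isolated vertices; hence its components are determined by the partition of the edge set $E$, and by the claim this is precisely the partition of $E$ into $\langle\alpha,\beta\rangle$-orbits. Thus $G_P$ is connected if and only if $E$ is a single orbit, i.e.\ if and only if $P$ is irreducible. I expect the only real care to be needed in the bookkeeping of the converse direction, namely matching a vertex-walk in $G_P$ to a single chain of orbit relations while tracking the alternating black/white (that is, $\alpha$ versus $\beta$) steps; this becomes routine once the edge-adjacency reformulation above is in place.
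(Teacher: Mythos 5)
Your proof is correct; the paper itself gives no proof of this proposition (it is explicitly left to the reader to check), and your argument --- identifying the connected components of $G_P$ with the $\langle\alpha,\beta\rangle$-orbits on $E$ by translating walks in $G_P$ into words in $\alpha^{\pm1},\beta^{\pm1}$ and back --- is exactly the standard verification the paper intends. The details you flag (handling $E=\emptyset$, prepending/appending $e$ and $e'$ to the edge sequence of a vertex-walk, and noting that no vertex is isolated so components are determined by the edge partition) are the right points of care and are all handled correctly.
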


We now return to the proof of Proposition~\ref{prp-bpt-spp}.  Let $T$
be a bipartite plane tree, with $\deg T \geq 1$, and $P$ the
associated pair of permutations.  We have seen that $P$ is irreducible
and that $\alpha\beta$ has a single orbit on the edges of $P$.  Now
cut the plane along $T$ and, using rubber sheet geometry, deform the
cut plane until: (i)~it is a disc that is removed, and (ii)~the
boundary circle is divided into $2n$ arcs.

Because $\deg T\geq 1$, it has a vertex $v$ that lies on only one edge
$e$.  Suppose $v$ is black.  It follows that $\alpha(e) = e$.
Removing $e$ from $T$ glues back together two adjacent edges of the
boundary circle.  The result now follows if we can prove: (i)~the
hypothesis on $\alpha\beta$ implies that we can always find such an
edge, and (ii)~after removal of this edge the new $\alpha\beta$
still satisfies the hypothesis.  This will be done in
\cite{jfine-rpt-an}, or the reader can treat it as an exercise.

\section{Shabat polynomials and plane trees}

\aaa[Unbiased Shabat polynomials.]
We start with a summary of already known definitions and results. What
others have called a \emph{Shabat polynomial} we call, for clarity, an
\emph{unbiased Shabat polynomial}.  The same applies to \emph{dessins}
and \emph{unbiased dessins}.

\begin{definition}
An \defemph{unbiased Shabat polynomial} is a non-constant polynomial
function $f:\mathbb{C}\to\mathbb{C}$ together with an ordered pair
$(\vb, \vw)$ of distinct points in $\mathbb{C}$, such that if
$f'(u)=0$ then $f(u)\in\{\vb, \vw\}$.
\end{definition}

We call $\vb$ and $\vw$ the \emph{black} and \emph{white vertices}
respectively, and throughout will write $v_0 = (\vb + \vw)/2$ for the
midpoint of the line segment or \emph{edge} $[\vb, \vw]$ that joins
them.  Note that $\vb$ and $\vw$ need not be critical values.  For
example, $z\mapsto z$ is unbiased Shabat, for any distinct $\vb$ and
$\vw$.

\begin{definition}
A \defemph{change of coordinates} (on $\mathbb{C}$) is a map
$\psi:\mathbb{C}\to\mathbb{C}$ of the form $\psi(z) = az + b$, where
$a, b\in\mathbb{C}$ and $a \neq 0$.
\end{definition}

\begin{notation}
$\mathcal{S}'_n$ consists of all unbiased Shabat polynomials of degree
  $n$, modulo change of coordinates on both domain and range.  We
  write $\mathcal{S}' = \bigcup \mathcal{S}'_n$.
\end{notation}

Thus, each element $s$ of $\mathcal{S}'$ is an \emph{equivalence
  class} of unbiased Shabat polynomials.  This is why we need bias.
We use bias to (i)~reduce $s$ to a finite set of representatives, and
then (ii)~choose one of the representatives.  A polynomial
$f\in\Qbar[z]$ is much closer to algebraic numbers than an unbiased
Shabat equivalence class.  This is a great help (see
\S\ref{s-sh-choose}).

The reader is asked to check the following.  (1)~Change of coordinates
preserves the degree of~$f$.  (2)~Composition of functions induces a
group structure on the set of changes of coordinates.  (3)~If $f$ is
unbiased Shabat then so is $f\circ \psi$, with the same vertex pair
$(\vb, \vw)$.  (4)~Similarly, $\psi\circ f$ is also unbiased Shabat,
but with the pair $(\psi(\vb), \psi(\vw))$.  (5)~Given unbiased Shabat
$f$ there is a unique $\psi$ such that $(\vb, \vw)$ becomes $(-1, +1)$
when we apply $\psi$ to produce $\psi\circ f$.

\begin{definition}
$\mathcal{T}'_n$ consists of all non-empty bipartite plane trees with
  $n$ edges, and $\mathcal{T}' = \bigcup \mathcal{T}'_n$.
\end{definition}

As usual, $\mathcal{T}'$ is up to relabelling combinatorial
equivalence.  The next result is Grothendieck's bridge.  For a proof
see \cite{GGD}, \cite{LZ} or \cite{jfine-rpt-an}.

\begin{theorem}
\label{thm-ubt-bij}
The map $f\mapsto T_f=f^{-1}([\vb,\vw])$ induces a bijection between
$\mathcal{S}'_n$ and $\mathcal{T}'_n$.
\end{theorem}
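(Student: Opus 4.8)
The plan is to pass to the Riemann sphere and read the tree off the branching data of the associated covering, then run the construction backwards via the Riemann existence theorem. Write $\mathbb{P}^1=\mathbb{C}\cup\{\infty\}$; a non-constant degree-$n$ polynomial extends to a degree-$n$ holomorphic map $f:\mathbb{P}^1\to\mathbb{P}^1$ that is totally ramified over $\infty$. I would establish the theorem in three steps: (a)~the map is well defined, i.e.\ $T_f$ is a bipartite plane tree with $n$ edges depending only on the class of $f$ in $\mathcal{S}'_n$; (b)~injectivity; and (c)~surjectivity.

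For (a), set $D=\mathbb{P}^1\setminus[\vb,\vw]$, an open disc containing $\infty$. By the Shabat condition every finite critical value lies in $\{\vb,\vw\}\subset[\vb,\vw]$, so the only critical value in $D$ is $\infty$. Hence $f:f^{-1}(D)\to D$ is a branched cover of a disc, branched only over $\infty$; since $f$ is totally ramified there, the monodromy around $\infty$ is an $n$-cycle, so $f^{-1}(D)$ is connected, and Riemann--Hurwitz gives $\chi\bigl(f^{-1}(D)\bigr)=n\cdot 1-(n-1)=1$, so $f^{-1}(D)$ is again a disc. Its complement $T_f=\mathbb{P}^1\setminus f^{-1}(D)$ is then a connected finite graph with a single complementary face, the simply connected disc $f^{-1}(D)$; the Euler relation $V-E+F=2$ with $F=1$ forces $V=E+1$, the tree condition. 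Over the open segment, which carries no critical value, $f$ is an $n$-sheeted unramified cover, giving $n$ arcs---the edges---and colouring $f^{-1}(\vb)$ black and $f^{-1}(\vw)$ white makes $T_f$ bipartite. Independence of representative is a short check: precomposition $f\circ\psi$ replaces $T_f$ by $\psi^{-1}(T_f)$, the same plane tree, while postcomposition $\psi\circ f$ leaves $f^{-1}([\vb,\vw])$ unchanged since $\psi$ carries $[\vb,\vw]$ to $[\psi(\vb),\psi(\vw)]$.

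For (b) and (c) I would pass through monodromy. Normalise each class by item~(5) so that $(\vb,\vw)=(-1,+1)$; then $f$ is determined up to a domain coordinate change, and the combinatorial data of $T_f$ is precisely the monodromy of $f$ over $-1,+1,\infty$, namely the triple $(\alpha,\beta,(\alpha\beta)^{-1})$ attached to $T_f$ by Proposition~\ref{prp-bpt-spp}, with $\alpha\beta$ a single cycle (one face). Two normalised polynomials with isomorphic plane trees have conjugate monodromy, and the Riemann existence theorem---which puts connected degree-$n$ branched covers of $\mathbb{P}^1$ with prescribed branch points into bijection with transitive triples up to simultaneous conjugacy---yields a biholomorphism of sources carrying one to the other; fixing the totally ramified point over $\infty$, it is affine, i.e.\ a coordinate change, giving injectivity. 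For surjectivity, start from $T\in\mathcal{T}'_n$, take its triple with $\alpha\beta$ a single $n$-cycle, and invoke the Riemann existence theorem to obtain a connected cover $X\to\mathbb{P}^1$ realising it. Riemann--Hurwitz pins down the genus: the total ramification is $(n-b)+(n-w)+(n-1)=2n-2$ using $b+w=n+1$, so $2-2g=2n-(2n-2)=2$ and $g=0$, whence $X\cong\mathbb{P}^1$. The single, totally ramified point over $\infty$ may be placed at $\infty$, exhibiting the map as a degree-$n$ polynomial $f$ whose finite critical values lie in $\{-1,+1\}$; thus $f$ is unbiased Shabat and, by construction, $T_f=T$.

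The main obstacle is step~(c): manufacturing an actual polynomial from purely combinatorial data. This is where the Riemann existence theorem does the essential work, turning the monodromy triple into a holomorphic branched cover, and where the genus computation is indispensable for identifying the abstract surface as $\mathbb{P}^1$ so that the cover becomes a polynomial. A secondary subtlety, easy to overlook, is that the \emph{plane} structure of the tree---the cyclic orders recorded by $\alpha$ and $\beta$---must be used throughout: it is this orientation data, not the abstract tree, that matches the local monodromy and makes the correspondence a genuine bijection rather than merely a surjection onto abstract trees.
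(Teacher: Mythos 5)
Your argument is correct, but note that the paper does not prove this theorem at all: it labels it ``Grothendieck's bridge'' and defers entirely to the references (Girondo--Gonz\'alez-Diez, Lando--Zvonkin, and the author's paper in preparation). What you have written is essentially the standard proof from those sources --- the Riemann--Hurwitz count showing $f^{-1}(D)$ is a disc and hence $T_f$ is a tree, the identification of the plane-tree permutation pair with the monodromy over $\{-1,+1,\infty\}$, and the Riemann existence theorem plus the genus computation $b+w=n+1\Rightarrow g=0$ for surjectivity --- and all the steps check out, including the two points you rightly flag as essential (realising the abstract cover as a \emph{polynomial} by placing the unique totally ramified point at $\infty$, and using the cyclic orders rather than the abstract tree so that the correspondence is a bijection). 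So you have supplied, correctly, the proof the paper chose to omit.
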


$T_f$ is a combinatorial, and hence topological, description of $f$.
This is because $T_f$ can be used as the data for a gluing
construction, via covering spaces (see~\S\ref{s-bd-cs-pi}), that gives
a map $\mathbb{R}^2\to\mathbb{R}^2$ that is topologicaly equivalent to
$f:\mathbb{C}^2\to\mathbb{C}^2$.  For details see \cite{GGD} or
\cite{jfine-rpt-an}.

The theorem states that (i)~change of coordinates does not change the
combinatorial structure of $T_f$ (this is left to the reader),
(ii)~$T_f$ is a bipartite plane tree, and (iii)~we can reconstruct $f$
from $T_f$, up to change of coordinates.  Put another way, topology
determines geometry.  In \S\ref{s-sh-bsp} we add bias to both $f$ and
$T_f$.  We do this so that $f$ to be reconstructed exactly, without
the change of coordinates indeterminacy.

The following are key for the usefulness of the bridge.  For a proof
see \cite{GGD}, \cite{LZ} or \cite{jfine-rpt-an}.

\begin{lemma}
Each equivalence class $s=[f]$ in $\mathcal{S}'$ has at least one
element $f_1$ that lies in $\Qbar[z]$.
\end{lemma}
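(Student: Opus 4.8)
The plan is to deduce the result from two facts: the bridge of Theorem~\ref{thm-ubt-bij} makes each $\mathcal{S}'_n$ a \emph{finite} set, and a zero-dimensional scheme defined over $\mathbb{Q}$ has all of its points algebraic. First I would rigidify the change-of-coordinates freedom. Fixing the class $s=[f]$ of degree $n$, apply a range coordinate change to move $(\vb,\vw)$ to $(-1,+1)$, as in point~(5) above; then apply a domain coordinate change $z\mapsto az+b$ to make $f$ monic and to kill its $z^{n-1}$ coefficient. The translation $b$ is then forced, and $a$ is determined up to an $n$-th root of unity. Thus $s$ acquires a finite, nonempty set of \emph{normalized} representatives, of which I need to exhibit only one with algebraic coefficients.

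Next I would write down the defining equations over $\mathbb{Q}$. Writing $f(z)=z^n+c_{n-2}z^{n-2}+\dots+c_0$ with the $c_i$ as unknowns, form the discriminant $D(t)=\mathrm{Disc}_z\bigl(f(z)-t\bigr)$, a polynomial in $t$ whose coefficients lie in $\mathbb{Z}[c_0,\dots,c_{n-2}]$ and whose roots are precisely the critical values of $f$. The unbiased Shabat condition is exactly that every root of $D(t)$ lies in $\{-1,+1\}$, i.e.\ that $D(t)=\lambda\,(t-1)^p(t+1)^q$ for the exponents $p,q\ge 0$ with $p+q=\deg D$ determined by the combinatorial type of $s$, and some leading coefficient $\lambda$. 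Equating coefficients of $t$ on both sides gives a finite system of polynomial equations in the $c_i$ and $\lambda$ with coefficients in $\mathbb{Q}$. Together with the normalization, this system cuts out an affine scheme $W$ defined over $\mathbb{Q}$.

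The crucial point is that $W$ is zero-dimensional. Each $\mathbb{C}$-point of $W$ is a normalized Shabat polynomial, and the normalized representatives of each class form a finite set; since $\mathcal{S}'_n$ is finite by Theorem~\ref{thm-ubt-bij} (as $\mathcal{T}'_n$ is a finite set of combinatorial objects), $W(\mathbb{C})$ is finite and hence $W$ has dimension $0$. A zero-dimensional affine $\mathbb{Q}$-scheme is $\mathrm{Spec}$ of a finite-dimensional $\mathbb{Q}$-algebra, so the residue field at each point is a number field and every coordinate function takes an algebraic value there. Therefore the coefficients $c_i$ of each normalized representative lie in $\Qbar$, giving the required $f_1\in\Qbar[z]$.

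I expect the main obstacle to be establishing zero-dimensionality rigorously, that is, ruling out a positive-dimensional family of Shabat polynomials within a single class. This is precisely where the combinatorial rigidity supplied by the bridge is indispensable: without first quotienting out (by normalizing) the continuous change-of-coordinates group, $W$ would be positive-dimensional and its points would not be forced to be algebraic. A secondary technical care is the bookkeeping of multiplicities in the factorization $D(t)=\lambda(t-1)^p(t+1)^q$, so that the coefficient-matching equations genuinely have coefficients in $\mathbb{Q}$ and correctly encode the Shabat condition for the given type.
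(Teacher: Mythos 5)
The paper does not actually prove this lemma---it defers to \cite{GGD}, \cite{LZ} and \cite{jfine-rpt-an}---so there is no in-paper argument to compare against; judged on its own, your proof is correct, and it is essentially the standard ``rigidity'' argument that those references give (Lando--Zvonkin present exactly this for Shabat polynomials). The two load-bearing steps both hold up. First, the normalization (vertex pair $(-1,+1)$, monic, vanishing $z^{n-1}$ coefficient) does reduce each class to a nonempty set of at most $n$ representatives, and taking the union of the loci $D(t)=\lambda(t-1)^p(t+1)^q$ over the finitely many pairs $(p,q)$ with $p+q=n-1$ gives a $\mathbb{Q}$-scheme whose complex points are precisely the normalized Shabat polynomials of degree $n$; you do not need to know the passport of $s$ in advance, only that finitely many $(p,q)$ can occur. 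Second, the appeal to Theorem~\ref{thm-ubt-bij} for finiteness is not circular: the bijection with $\mathcal{T}'_n$ is established by covering-space and monodromy arguments that nowhere use algebraicity of coefficients, and in fact all you need from it is the injectivity half (a tree determines its polynomial up to change of coordinates) together with the finiteness of $\mathcal{T}'_n$. Given that $W(\mathbb{C})$ is finite, zero-dimensionality over $\mathbb{Q}$ forces every coordinate of every point into $\Qbar$, as you say. Two cosmetic remarks: $\lambda$ need not be carried as an unknown, since for monic $f$ the leading coefficient of $D(t)$ in $t$ is a fixed nonzero rational (a power of $n$ up to sign); and the positive-dimensional family you worry about would have to live across all of $W$, not within a single class---the per-class finiteness is pure normalization, and it is only the finiteness of the set of classes that requires the bridge.
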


\begin{theorem}
\label{thm-ubs-ffl}
$\Gamma$ acts on $\mathcal{S}'_n$, and its action on $\mathcal{S}'$ is
faithful.
\end{theorem}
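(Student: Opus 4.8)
The plan is to define the action on polynomial representatives lying in $\Qbar[z]$, check that it descends to equivalence classes, and then extract faithfulness from the rigidity of the bridge together with Proposition~\ref{prp-mp-cgi}. First I would fix a convenient normal form. As noted in the checks following the change-of-coordinates definition, every class $s\in\mathcal{S}'_n$ has a representative whose vertex pair is the \emph{rational} pair $(\vb,\vw)=(-1,+1)$; applying the Lemma to this normalised form, such a representative may be taken with $f\in\Qbar[z]$. For $\sigma\colon u\mapsto\tilde u$ in $\Gamma$ I then set $\sigma\cdot s=[\tilde f]$, where $\tilde f$ is obtained by applying $\sigma$ to the coefficients of $f$.

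Next I would verify that $\tilde f$ is again an unbiased Shabat polynomial of degree $n$. Since $\sigma$ is a field automorphism it sends the leading coefficient of $f$ to a non-zero number, so $\deg\tilde f=\deg f=n$ and $\tilde f$ is non-constant; this already shows that $\Gamma$ preserves each $\mathcal{S}'_n$. For the Shabat condition I would use that $f\mapsto f'$ is Galois covariant, so $\tilde f{}'=\widetilde{f'}$; hence the critical points of $\tilde f$ are exactly the images $\tilde u$ of the critical points $u$ of $f$, and at each such point $\tilde f(\tilde u)=\widetilde{f(u)}\in\{\widetilde{-1},\widetilde{+1}\}=\{-1,+1\}$, because $\pm1\in\mathbb{Q}$ are fixed by $\sigma$. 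As $-1\neq+1$, the pair $(-1,+1)$ again witnesses that $\tilde f$ is unbiased Shabat.

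For well-definedness, suppose $f_1,f_2\in\Qbar[z]$ both have vertex pair $(-1,+1)$ and represent the same class. Any change of coordinates on the range fixing both $-1$ and $+1$ is the identity, so $f_2=f_1\circ\phi$ for a single domain change of coordinates $\phi(z)=cz+d$. Comparing the top two coefficients of $f_1(cz+d)=f_2(z)$ shows first that $c^{n}\in\Qbar$, hence $c\in\Qbar$, and then that $d\in\Qbar$; thus $\phi$ is defined over $\Qbar$. Applying $\sigma$ gives $\tilde f_2=\tilde f_1\circ\tilde\phi$ with $\tilde\phi$ again a change of coordinates, so $[\tilde f_1]=[\tilde f_2]$ and the action is well defined. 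The identity of $\Gamma$ acts trivially, and composing the coefficient maps shows the action respects composition, so we obtain a genuine action of $\Gamma$ on each $\mathcal{S}'_n$, and hence on $\mathcal{S}'$.

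Faithfulness is where the real work lies. Suppose $\sigma$ fixes every class; I must show $\tilde a=a$ for all $a\in\Qbar$. The residual freedom in the normal form is the one-parameter family of domain changes of coordinates, and using it to make $f$ monic with vanishing $z^{n-1}$ coefficient pins $f$ down up to the finite group of maps $z\mapsto cz$ with $c^{n}=1$. In this rigid form $\sigma\cdot[f]=[f]$ forces $\tilde f=f\circ\phi$ for one of these finitely many $\phi$, so $\sigma$ can only multiply each coefficient of $f$ by a root of unity. The main obstacle is the remaining input: that these coefficients are rich enough to pin down all of $\Qbar$ — concretely, that any prescribed $a\in\Qbar$ is realised inside the coefficient field of some plane tree (via Theorem~\ref{thm-ubt-bij}) in a way the $\Gamma$-orbit of $[f]$ detects. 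Granting this supply of trees, which is the substantive content cited to \cite{GGD,LZ}, Proposition~\ref{prp-mp-cgi} lets one promote a chosen conjugate to an actual element of $\Gamma$, so any $\sigma$ with $\tilde a\neq a$ must move the corresponding class. I expect the construction of a single tree certifying a given algebraic number to be the genuinely hard part; the rest is bookkeeping with the tools already assembled above.
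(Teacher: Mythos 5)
The paper does not actually prove this theorem: it is stated as a known result with the proof deferred to \cite{GGD}, \cite{LZ} and \cite{jfine-rpt-an}, so there is no internal proof to compare against. Your construction of the action is sound in outline and essentially standard: normalise the vertex pair to $(-1,+1)$, take a representative in $\Qbar[z]$, act on coefficients, and get well-definedness by showing the residual domain change of coordinates is itself defined over $\Qbar$. One caveat: to know that a $\Qbar[z]$ representative with vertex pair $(-1,+1)$ exists at all, you need the normalising $\psi$ to be defined over $\Qbar$, which requires Lemma~\ref{lem-cv-in-qb} and some care with the degenerate cases where $f$ has fewer than two critical values (the same special cases that complicate the proof of Proposition~\ref{prp-ubt-to-bt}); your appeal to ``the Lemma applied to the normalised form'' quietly assumes this.

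The genuine gap is faithfulness, and you have named it yourself: everything reduces to producing, for each $\sigma\in\Gamma$ and each $a\in\Qbar$ with $\sigma(a)\neq a$, a tree whose class is moved by $\sigma$, and you simply ``grant'' this supply of trees. That is not residual bookkeeping --- it \emph{is} the theorem. Your rigidity observation (normalising $f$ to be monic with vanishing $z^{n-1}$ coefficient, so that the stabiliser of $[f]$ can only rescale coefficients by $n$-th roots of unity) is a sensible reduction, but it does not by itself exhibit a single class that $\sigma$ moves; one must still construct, for an arbitrary $a\in\Qbar$, a Shabat polynomial whose normalised coefficients generate a field not fixed by $\sigma$, and Proposition~\ref{prp-mp-cgi} cannot be invoked until such a polynomial is in hand. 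This construction is the substantive content of the faithfulness argument given in \cite{LZ} and \cite{GGD}; without it, your proposal establishes only the first clause of the theorem (that $\Gamma$ acts on $\mathcal{S}'_n$).
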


\aaa[Goals.]
The bijection between $\mathcal{S}'$ and $\mathcal{T}'$ produces an
action of the absolute Galois group $\Gamma$ on $\mathcal{T}'_n$.
Understanding this action combinatorially, without going over the
bridge into algebraic numbers, would help us understand $\Gamma$.
Some first steps are to find Galois invariants of $\mathcal{T}'$, and
to understand the decomposition of $\mathcal{T}'_n$ into orbits.

The main goal is understanding $\Gamma$.  For us biased and unbiased
objects are a means to an end. The main idea of this paper is that the
goal is better reached by using biased objects.





\aaa[\label{s-sh-choose}Choosing $f$ in $s\in\mathcal{S}'_n$.]
We want Galois invariants of $s\in\mathcal{S}'_n$.  If each element of
$\mathcal{S}'_n$ were a polynomial $f\in\Qbar[z]$ then the minimal
polynomials $g_i(z)\in\mathbb{Q}[z]$ of the coefficients $a_i$ of $f$ would
be Galois invariants of $f$ and hence of $T_f$.  But each element $s$
of $\mathcal{S}'_n$ is an equivalence class of unbiased Shabat
polynomials, not a single such polynomial.

If we could in an Galois invariant way choose an $f$ in $s$, then we
could use that $f$ instead of $s$.  This seems not to be possible, but
we can come close enough.  We can define a non-empty finite subset of
$s$, in a Galois invariant manner (see also~\S\ref{s-cnc-cr}).
Choosing an element from this subset we call the process of
\emph{biasing $f$} (in its equivalence class).

Suppose $f$ is unbiased Shabat.  Let $f_1$ be $\psi\circ f \circ
\eta$, for changes of coordinates $\psi$ and $\eta$.  We want to
choose $\psi$ and $\eta$ so $f_1$ is fixed, up to a finite choice.
Already, the reader has checked that there is a unique $\psi$ such
that $(-1,1)$ is the black-white vertex pair associated with $f_1$.
The uniqueness is important.  We now need a condition that determines
$\eta$.

Let $f_0$ be $\psi\circ f$.  It has vertex pair $(-1, 1)$.  Now
consider the equation $f_0(z) = 0$.  Counted with multiplicity, this
has $n$ roots.  Let $u\in\mathbb{C}$ be one of them.  If $f_0'(u)=0$
then, by the Shabat condition, $f_0(u) \in\{-1,+1\}$.  Thus,
$f_0'(u)\neq 0$ and $f_0(z)=0$ has exactly $n$ distinct roots.

Recall that $f_1=f_0\circ \eta$. Assume that $f_0(u)=0$.  This is the
finite choice.  The change of coordinates $\eta$ has two degrees of
freedom.  If $\eta(z) = az + u$ then $f_1(0) = f_0(u) = 0$.  Assume
$\eta$ has this form.  This leaves $a$ to be determined.  Now consider
$f_1'(0)$.  By the chain rule we have $f_1'(0) = a f_0'(u)$.  We have
just seen that $f_0'(u)\neq 0$ and so we can write $a=1/f_0'(0)$ to
give $f_1'(0) = 1$.  The bias is a choice of one of the $n$ roots of
$f_0(z)=0$, or equivalently $f(z)=v_0$, where as usual $v_0=(\vb +
\vw)/2$.

\aaa[Applying the choosing process.]
Here we summarize the \S\ref{s-sh-choose}, and prepare for bias.  The
previous discussion shows:

\begin{proposition}
\label{prp-uni-cc}
Suppose $f$ is unbiased Shabat, with vertex pair $(\vb,\vw)$.  Suppose
also that $u\in\mathbb{C}$ is a root of $f(z) = v_0$.  Then there is a
unique pair $\psi, \eta$ of changes of coordinates such that
(i)~$\psi(\vb) = -1$ and $\psi(\vw) = +1$, (ii)~$\eta(0) = u$, and
(iii)~$(\psi\circ f\circ\eta)'(0) = 1$.
\end{proposition}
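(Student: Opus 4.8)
The plan is to prove existence and uniqueness of the pair $(\psi,\eta)$ by breaking the three conditions into two independent stages, each of which is already essentially handled in the preceding discussion. Condition~(i) constrains only $\psi$, while conditions~(ii) and~(iii) constrain $\eta$ once $\psi$ is fixed; so I would treat $\psi$ first and then $\eta$.

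First I would determine $\psi$. A change of coordinates $\psi(z)=az+b$ must send the ordered pair $(\vb,\vw)$ to $(-1,+1)$, i.e.\ solve $a\vb+b=-1$ and $a\vw+b=+1$. Since $\vb\neq\vw$ (the two vertices are distinct points of $\mathbb{C}$ by definition), this linear system has the unique solution $a=2/(\vw-\vb)$ and $b=-(\vw+\vb)/(\vw-\vb)$, with $a\neq0$; so $\psi$ exists and is unique, as already asserted in the text. Set $f_0=\psi\circ f$, which is unbiased Shabat with vertex pair $(-1,+1)$ and midpoint $v_0$ carried to $0$.

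Next I would determine $\eta$. Write $\eta(z)=cz+d$. Condition~(ii), $\eta(0)=u$, forces $d=u$. For condition~(iii), set $f_1=f_0\circ\eta=\psi\circ f\circ\eta$; by the chain rule $f_1'(0)=c\,f_0'(u)$. The crucial input, already established in \S\ref{s-sh-choose}, is that $f_0'(u)\neq0$: since $u$ is a root of $f(z)=v_0$, equivalently of $f_0(z)=0$, and the only critical values of $f_0$ are $\pm1$, the point $u$ cannot be critical, so $f_0'(u)\neq0$. Hence $c=1/f_0'(u)$ is the unique admissible value, and it is nonzero, so $\eta$ is a genuine change of coordinates. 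This pins down $\eta$ uniquely and completes both existence and uniqueness.

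The step I expect to be the only real obstacle is verifying $f_0'(u)\neq0$, since without it condition~(iii) could not be solved for $c$ and the argument would collapse; everything else is linear algebra and the chain rule. But this non-vanishing is exactly the consequence of the Shabat condition drawn out in the preceding section, so I would simply invoke it. I would close by remarking that the passage from $f$ to the biased representative $f_1$ is thus entirely determined by the finite datum of which root $u$ of $f(z)=v_0$ is chosen, which is the point of the proposition.
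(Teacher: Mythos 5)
Your proof is correct and follows essentially the same route as the paper, which proves this proposition via the discussion in \S\ref{s-sh-choose}: first pin down $\psi$ uniquely from the vertex pair, then use $f_0'(u)\neq 0$ (a consequence of the Shabat condition, since $f_0(u)=0\notin\{-1,+1\}$) to solve uniquely for $\eta$. You merely make explicit the linear algebra for $\psi$ that the paper leaves to the reader, and you correct in passing the paper's slip of writing $a=1/f_0'(0)$ where $a=1/f_0'(u)$ is meant.
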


Note that $\psi$ is affine linear, so $\psi(v_0) = ((-1) + (+1))/2 =
0$ and thus $(\psi\circ f\circ \eta)(0) = 0$.

\begin{proposition}
\label{prp-ubt-to-bt}
Let $f$ and $u$ be as above, and let $f_1$ be the resulting $\psi\circ
f \circ \eta$.  Then:
\begin{enumerate}
\item $f_1$ is biased Shabat, as in Definition~\ref{dfn-bsp} below.
\item If $f\in\Qbar[z]$ then $f_1$ is also in $\Qbar[z]$.
\item Applied to $\tilde{f}$ and $\tilde{u}$ the construction yields
  $\tilde{r}$, where $r=f_1$.  In other words, the construction is
  Galois covariant.
\end{enumerate}
\end{proposition}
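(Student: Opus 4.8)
The plan is to notice that the whole passage from $(f,u)$ to $f_1$ is built from only three kinds of operation: field operations on the data (addition, subtraction, multiplication, and division by a quantity already shown to be non-zero), composition of polynomials, and differentiation. Each of the three assertions then reduces to the matching stability property of these operations. I would treat~(1) by checking the clauses of Definition~\ref{dfn-bsp}, (2) by tracking that every intermediate quantity stays in $\Qbar$, and (3), the substantive part, by pushing an arbitrary $u\mapsto\tilde{u}$ in $\Gamma$ through each operation.

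For~(1), the Shabat property of $f_1$ comes for free from the composition rules the reader has already checked: $\psi\circ f$ is unbiased Shabat with vertex pair $(\psi(\vb),\psi(\vw))=(-1,+1)$, and then $f_1=(\psi\circ f)\circ\eta$ is unbiased Shabat with the same pair. The two remaining normalisations are exactly the conclusions recorded in Proposition~\ref{prp-uni-cc}: clause~(iii) gives $f_1'(0)=1$, and the note following it gives $f_1(0)=0$. Reading these against Definition~\ref{dfn-bsp} finishes~(1). For~(2), I assume as the statement intends that $\vb,\vw\in\Qbar$ (automatic when they are critical values, since the critical points are roots of $f'\in\Qbar[z]$ and $\Qbar$ is algebraically closed), and then propagate algebraicity: $v_0=(\vb+\vw)/2\in\Qbar$; the chosen root $u$ of $f(z)-v_0\in\Qbar[z]$ lies in $\Qbar$ as $\Qbar$ is algebraically closed; the coefficients $2/(\vw-\vb)$ and $-1-2\vb/(\vw-\vb)$ of $\psi$ lie in $\Qbar$ since $\vw-\vb\neq 0$; hence $f_0=\psi\circ f\in\Qbar[z]$, and $a=1/f_0'(u)\in\Qbar$ because $f_0'(u)\in\Qbar$ is non-zero; so $\eta(z)=az+u$ has $\Qbar$-coefficients and $f_1=f_0\circ\eta\in\Qbar[z]$.

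For~(3) the plan is to verify that each step commutes with $\tilde{\cdot}$ and therefore that running the construction on $(\tilde{f},\tilde{u})$ returns $\widetilde{f_1}$. Since $\tilde{\cdot}$ is a field automorphism fixing $\mathbb{Q}$, it commutes with the field operations defining $\psi$, so the affine map built from $\tilde{\vb},\tilde{\vw}$ is $\tilde{\psi}$ and still sends $\tilde{\vb},\tilde{\vw}$ to $-1,+1$, while $\widetilde{v_0}=(\tilde{\vb}+\tilde{\vw})/2$. From $\tilde{f}(\tilde{u})=\widetilde{f(u)}=\widetilde{v_0}$ the point $\tilde{u}$ is a root of $\tilde{f}(z)-\widetilde{v_0}$, hence a legitimate choice for the transported data $(\tilde{f},\tilde{\vb},\tilde{\vw})$. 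Compatibility with composition gives $\widetilde{f_0}=\tilde{\psi}\circ\tilde{f}$; invoking the flagged Galois covariance of differentiation $\widetilde{f_0'}=(\widetilde{f_0})'$ gives $\widetilde{f_0'(u)}=(\widetilde{f_0})'(\tilde{u})$, so $\tilde{a}=1/(\widetilde{f_0})'(\tilde{u})$ and $\tilde{\eta}(z)=\tilde{a}\,z+\tilde{u}$. Assembling, $\widetilde{f_1}=\widetilde{f_0}\circ\tilde{\eta}=\tilde{\psi}\circ\tilde{f}\circ\tilde{\eta}$, which is exactly the construction applied to $(\tilde{f},\tilde{u})$; uniqueness in Proposition~\ref{prp-uni-cc} identifies it as the biased form, i.e.\ $\tilde{r}$ with $r=f_1$.

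The hard part will be the differentiation step in~(3): evaluation, reciprocal, and composition are visibly field-theoretic and so transparently commute with $\tilde{\cdot}$, but $f_0\mapsto f_0'$ is not a ring homomorphism, so its compatibility with $\tilde{\cdot}$ must be imported from the separately stated covariance $\widetilde{f'}=(\tilde{f})'$ rather than produced on the spot. A secondary point needing care is that the single \emph{choice} in the construction transports correctly: one must note that $\tilde{\cdot}$ maps the root set of $f(z)-v_0$ bijectively onto that of $\tilde{f}(z)-\widetilde{v_0}$, so choosing $u$ and then applying $\tilde{\cdot}$ agrees with choosing $\tilde{u}$ for the transported data.
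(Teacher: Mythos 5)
Your treatments of parts~(1) and~(3) are sound and fill in exactly the formal verifications the paper leaves to the reader. The problem is part~(2), where you write that you ``assume as the statement intends that $\vb,\vw\in\Qbar$''. The statement does not intend this. In the paper's definition of an unbiased Shabat polynomial the pair $(\vb,\vw)$ is only required to \emph{contain} all critical values, and the paper explicitly notes that $\vb$ and $\vw$ need not themselves be critical values (for instance $z\mapsto z$ is unbiased Shabat for \emph{any} distinct pair of points). So when $f\in\Qbar[z]$ has fewer than two critical values, one or both of $\vb,\vw$ may be transcendental, $\psi$ then fails to lie in $\Qbar[z]$, and your propagation-of-algebraicity argument collapses --- even though the conclusion $f_1\in\Qbar[z]$ is still true. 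Your parenthetical justification (``automatic when they are critical values'') covers exactly the generic two-critical-value case, which is also the easy case in the paper, and silently discards what the paper itself calls ``a tricky special case''.

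The paper's proof spends most of its effort on precisely these degenerate cases. If $f$ has no critical values it is a change of coordinates and a direct computation gives $f_1(z)=z$, with no reference to $\vb,\vw$. If $f$ has exactly one critical value, say $\vb$, the paper observes that every white vertex of $T_f$ lies on a single edge, so $T_f$ is an $n$-pointed star; by Theorem~\ref{thm-ubt-bij} some change of coordinates carries $f$ to $p(z)=z^n$ with vertex pair $(0,1)$, the generic argument applies to $p$ (whose vertices are rational), and the uniqueness clause of Proposition~\ref{prp-uni-cc} forces $f_1=p_1\in\Qbar[z]$. The author even remarks that he sees no way to avoid invoking Theorem~\ref{thm-ubt-bij} or something like it here, so this is not a detail that can be waved away. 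To repair your proof you must either add these two cases explicitly or show that one may always replace $(\vb,\vw)$ by an algebraic pair without changing $f_1$ --- which is essentially the paper's trick.
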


\begin{proof}
Parts (1) and (3) are left to the reader.  \emph{Biased Shabat} is
defined as it is, to make (1) true.  Part~(3) is needed for the proof
of Theorem~\ref{thm-bt-bij}.  Its proof is purely formal.

The proof of~(2) has a tricky special case. Suppose $f\in\Qbar[z]$.
By Lemma~\ref{lem-cv-in-qb} below the critical values of $f$ lie in
$\Qbar$.  If $f$ has two critical values then $\vb, \vw \in \Qbar$.
This is enough to ensure $\psi\in\Qbar[z]$, as $\Qbar$ is a field.
Similarly, $u\in\Qbar$ as $f(u)=v_0$ and $\Qbar$ is algebraically
closed, and thus $\eta\in\Qbar[z]$.  As $f, \psi, \eta \in \Qbar[z]$
it follows that $f_1=\psi\circ f\circ \eta \in \Qbar[z]$.

We now have to deal with the special cases.  The first is easy.  If
$f$ has no critical values then it is a change of coordinates.  We ask
the reader to check that the process results in $f_1(z)=z$.

Now assume $f$ has exactly one critical value, say $\vb$.  This
requires a trick. Consider $T_f$.  By Theorem~\ref{thm-ubt-bij}, it is
a plane tree.  By assumption, the white vertices are not critical
points, and so lie on only one edge.  Thus, $T_f$ is an $n$-pointed
star, with a black vertex at the centre.  But $p(z) = z^n$ with $(0,
1)$ also gives $T_f$ and so, again by Theorem~\ref{thm-ubt-bij}, some
change of coordinates will take $f$ to $p$.  We are now out of the
special case, and the previous argument produces a $p_1\in\Qbar[z]$.
By uniqueness of the change of coordinates (see
Proposition~\ref{prp-uni-cc}), we have $f_1 = p_1$.  The author does
not see how to avoid using Theorem~\ref{thm-ubt-bij}, or something
similar.
\end{proof}

\begin{lemma}
\label{lem-cv-in-qb}
Suppose $f\in\Qbar[z]$. Then the critical values of $f$ lie in
$\Qbar$.
\end{lemma}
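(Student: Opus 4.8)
The plan is to chase the definition of \emph{critical value} through the two closure properties of $\Qbar$ recorded in \S\ref{s-i-an}: that $\Qbar$ is a field, and that it is algebraically closed. Both are needed, and nothing else is.

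First I would note that if $f\in\Qbar[z]$ then its derivative $f'$ also lies in $\Qbar[z]$, since the coefficients of $f'$ are obtained from those of $f$ by multiplying by the integers $1,2,\ldots,n$, and $\Qbar$ is closed under multiplication. A critical point of $f$ is, by definition, a root $u$ of $f'$. Because $f'\in\Qbar[z]$ and $\Qbar$ is algebraically closed, every root of $f'$ already lies in $\Qbar$; hence each critical point $u$ is an algebraic number. (If $\deg f\le 1$ then $f'$ is a nonzero constant or zero, there are no critical points, and the statement holds vacuously.) Finally, a critical value is $v=f(u)$ for such a $u$. Since $u\in\Qbar$ and the coefficients of $f$ lie in $\Qbar$, and $\Qbar$ is closed under addition and multiplication, the value $f(u)$ is again an element of $\Qbar$, which is exactly the claim.

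There is no genuine obstacle here: the lemma is immediate once ``$\Qbar$ is a field'' and ``$\Qbar$ is algebraically closed'' are in hand. The only point deserving care is the one the excerpt itself flags—that the algebraic closure of $\Qbar$ is an \emph{algebraic} rather than analytic fact. It is this property, and not the corresponding topological statement for $\mathbb{C}$, that must be invoked to place the critical points, and hence the critical values, inside $\Qbar$.
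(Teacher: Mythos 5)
Your proof is correct and follows essentially the same route as the paper: critical points are roots of $f'\in\Qbar[z]$ and hence lie in $\Qbar$ by algebraic closedness, and then the critical values $f(u)$ lie in $\Qbar$ because it is a field. The only difference is cosmetic --- you spell out why $f'\in\Qbar[z]$ and handle the degenerate case via $\deg f\le 1$ where the paper phrases it as $f$ constant.
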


\begin{proof}
Suppose $\deg f'\geq 1$, and $f'(u) = 0$. It follows that $u\in\Qbar$
(as $\Qbar$ is algebraically closed) and then $v = f(u)\in\Qbar$ (as
$\Qbar$ is a field).  The remaining case, $f(z)$ constant, is trivial.
\end{proof}

\aaa[\label{s-sh-bsp}Biased Shabat polynomials.]
Here we add bias to the definitions, and thereby remove equivalence
classes from the polynomial end of the bridge.  This will give new
Galois invariants.

\begin{definition}
\label{dfn-bsp}
A \defemph{biased Shabat polynomial} is a polynomial function
$f:\mathbb{C}\to\mathbb{C}$ such that (i)~if $f'(u)=0$ then
$f(u)\in\{-1, +1\}$, (ii)~$f(0)=0$, and (iii)~$f'(0)=1$.
\end{definition}

\begin{notation}
$\mathcal{S}_n$ is all biased Shabat polynomials of degree $n$, and
  $\mathcal{S} =\bigcup\mathcal{S}_n$.
\end{notation}

\begin{proposition}
If $f$ is biased Shabat then $f\in\Qbar[z]$.
\end{proposition}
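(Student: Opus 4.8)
The plan is to recognize every biased Shabat polynomial as the biasing of an \emph{algebraic} representative of its unbiased class, and then to invoke Proposition~\ref{prp-ubt-to-bt}(2).

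First I would fix a biased Shabat polynomial $f$ of degree $n$. By Definition~\ref{dfn-bsp}(i) every critical value of $f$ lies in $\{-1,+1\}$, so $f$ is unbiased Shabat with vertex pair $(\vb,\vw)=(-1,+1)$, and hence determines a class $s=[f]\in\mathcal{S}'_n$. The lemma stating that each equivalence class of $\mathcal{S}'$ has a representative lying in $\Qbar[z]$ then supplies a $g\in\Qbar[z]$ with $[g]=s$.

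Next I would show that $f$ is exactly the polynomial obtained by biasing $g$ at a suitable root. Normalize $g$ to $g_0=\psi_g\circ g$ with vertex pair $(-1,+1)$, using the unique such $\psi_g$. Since $f$ and $g_0$ both have vertex pair $(-1,+1)$ and lie in the same class, they differ by a change of coordinates on the domain alone: the range change of coordinates must fix both $-1$ and $+1$, and an affine map fixing two points is the identity. Thus $f=g_0\circ\mu$ for a unique $\mu$. Setting $u=\mu(0)$ gives $g_0(u)=f(0)=0=v_0$, so $u$ is a root of $g_0(z)=0$, while $f'(0)=1$ pins down $\mu'(0)$. Comparing this with Proposition~\ref{prp-uni-cc}, the map $\mu$ is precisely the $\eta$ of the biasing construction applied to $g$ at the root $u$; by the uniqueness asserted in that proposition, biasing $g$ at $u$ returns exactly $f$.

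Finally, since $g\in\Qbar[z]$, Proposition~\ref{prp-ubt-to-bt}(2) gives that its biasing lies in $\Qbar[z]$, and as that biasing is $f$ we conclude $f\in\Qbar[z]$. The main obstacle is the middle step: carefully matching the change-of-coordinates data so that the abstractly given biased $f$ is identified with the output of the construction, which rests entirely on the uniqueness in Proposition~\ref{prp-uni-cc}. The special cases of one or no critical values are already dispatched inside the proof of Proposition~\ref{prp-ubt-to-bt}(2), and are inherited here without extra work.
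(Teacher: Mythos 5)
Your proof is correct and follows essentially the same route as the paper: take a $\Qbar[z]$ representative of the unbiased class (via the lemma), bias it at the root corresponding to $0$, use the uniqueness in Proposition~\ref{prp-uni-cc} to identify the result with $f$, and conclude via Proposition~\ref{prp-ubt-to-bt}(2). Your middle step, checking that the range coordinate change must be the identity so that $f$ and the normalized representative differ only on the domain, is a slightly more explicit write-up of the matching the paper does implicitly.
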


\begin{proof}
This follows from the unbiased result.  Think of $f$ as unbiased
Shabat. By Theorem~\ref{thm-ubt-bij} there is a change of coordinates
$(\psi, \eta)$ that produces from $f$ an unbiased $\psi\circ f \circ
\eta = f_1\in\Qbar[z]$.  Now bias $f_1$, choosing $\eta^{-1}(0)$ as
the solution $u$ of $f_1(z) = v_0$.  By
Proposition~\ref{prp-ubt-to-bt} the result $f_2$ lies in $\Qbar[z]$.
By Proposition~\ref{prp-uni-cc} the change of coordinates that does
this is unique.  So it must be $(\psi^{-1},\eta^{-1})$ and thus $f =
f_2$ lies in $\Qbar[z]$.
\end{proof}

\begin{corollary}
$\Gamma$ acts on $\mathcal{S}_n$, by acting on the coefficients.
\end{corollary}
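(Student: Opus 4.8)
The plan is to reduce the corollary to a single closure statement. The assignment $f \mapsto \tilde{f}$, applying a fixed element $u \mapsto \tilde{u}$ of $\Gamma$ to each coefficient, is a ring automorphism of $\Qbar[z]$ that fixes the variable $z$, and these automorphisms compose and contain the identity exactly as the elements of $\Gamma$ do. So $\Gamma$ already acts on $\Qbar[z]$, and by the Proposition just proved $\mathcal{S}_n \subset \Qbar[z]$. Hence the only thing to check is that this action carries $\mathcal{S}_n$ into itself; the group-action axioms are then inherited from $\Qbar[z]$. I would therefore fix $f \in \mathcal{S}_n$ together with $u \mapsto \tilde{u}$ in $\Gamma$ and verify that $\tilde{f}$ again satisfies the degree condition and the three clauses of Definition~\ref{dfn-bsp}.

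The degree and clauses (ii), (iii) are routine. A field automorphism is injective and fixes $0$, so it sends the nonzero leading coefficient of $f$ to a nonzero element and $\deg\tilde{f} = n$. For the two normalizations I would invoke the compatibilities recorded in \S\ref{s-i-an}: $\tilde{f}(\tilde{u}) = \tilde{v}$ when $v = f(u)$, and the agreement of $\tilde{f}'$ with the Galois transform of $f'$. Since $0$ and $1$ are rational, hence fixed by $\Gamma$, evaluating at $\tilde{0} = 0$ gives $\tilde{f}(0) = \widetilde{f(0)} = 0$ and $\tilde{f}'(0) = \widetilde{f'(0)} = 1$.

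The substance, and the step I expect to be the main obstacle, is clause (i): every critical point of $\tilde{f}$ must have critical value in $\{-1,+1\}$. The warning in \S\ref{s-i-an} that $u \mapsto \tilde{u}$ need not be continuous means I cannot transport critical points topologically and must argue algebraically instead. By Lemma~\ref{lem-cv-in-qb} and the algebraic closedness of $\Qbar$, all critical points and values of $f$ lie in $\Qbar$, where $\Gamma$ acts. I would factor $f'(z) = c\prod_i (z - u_i)$ over $\Qbar$, the $u_i$ being the critical points of $f$; applying the ring automorphism $f \mapsto \tilde{f}$ term by term yields $\tilde{f}'(z) = \tilde{c}\prod_i (z - \tilde{u}_i)$ with $\tilde{c} \neq 0$, which is the full factorization of $\tilde{f}'$ over $\Qbar$. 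Thus the critical points of $\tilde{f}$ are exactly the $\tilde{u}_i$.

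It then remains to read off the critical values. For each critical point $w = \tilde{u}_i$ of $\tilde{f}$, setting $v = f(u_i) \in \{-1,+1\}$ gives $\tilde{f}(w) = \tilde{f}(\tilde{u}_i) = \tilde{v}$; and because $\pm 1$ are rational and hence Galois-fixed, $\tilde{v} = v \in \{-1,+1\}$. This verifies clause (i), completes the closure argument, and with the earlier reduction proves the corollary.
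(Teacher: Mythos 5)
Your proposal is correct and follows essentially the same route as the paper, which argues that the biased Shabat conditions are Galois invariant via the covariance of evaluation and differentiation together with the rationality of $0$, $\pm 1$. Your explicit factorization of $f'$ over $\Qbar$ merely spells out the paper's terse ``and vice versa'' step showing that every critical point of $\tilde{f}$ arises from one of $f$.
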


\begin{proof}
This is because the biased Shabat conditions are Galois invariant.
For example, if $f'(u)=0$ then $\tilde{f}'(\tilde{u}) = \tilde{0} =
0$, and vice versa.  Similarly, $f(u) = -1$ if and only if
$\tilde{f}(\tilde{u}) = -1$.  The same applies to $f(u) = +1$, $f(0) =
0$ and $f'(0)= 1$.
\end{proof}

\begin{corollary}
The action of $\Gamma$ on $\mathcal{S}$ is faithful.
\end{corollary}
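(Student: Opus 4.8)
The plan is to reduce the faithfulness of the action on $\mathcal{S}$ to the already-established faithfulness of the action on the unbiased classes $\mathcal{S}'$ (Theorem~\ref{thm-ubs-ffl}). An action is faithful precisely when no non-identity element of $\Gamma$ fixes every point, so I want to show that if $u\mapsto\tilde u$ fixes every biased Shabat polynomial, then it is the identity automorphism of $\Qbar$.

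First I would set up the natural forgetting map $\pi\colon\mathcal{S}\to\mathcal{S}'$. Given a biased Shabat polynomial $f$, regard it as an unbiased Shabat polynomial with vertex pair $(-1,+1)$, and let $\pi(f)$ be its equivalence class in $\mathcal{S}'$. Two properties of $\pi$ are needed. First, $\pi$ is surjective: given any class $s\in\mathcal{S}'_n$, choose a representative and apply the biasing construction of \S\ref{s-sh-choose} (with any choice of root $u$) to obtain a biased $f\in\mathcal{S}$ with $\pi(f)=s$; indeed each such class has exactly $n$ biased preimages, one per root. Second, $\pi$ is $\Gamma$-equivariant, i.e.\ $\pi(\tilde f)=\widetilde{\pi(f)}$. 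This is immediate because both actions are given by acting on the coefficients of a $\Qbar[z]$ representative, and by the preceding corollary $\tilde f$ is again biased; it is also exactly the Galois covariance recorded in Proposition~\ref{prp-ubt-to-bt}(3).

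With $\pi$ in hand the conclusion is formal. Suppose $u\mapsto\tilde u$ fixes every $f\in\mathcal{S}$. Given any $s\in\mathcal{S}'$, lift it to some $f\in\pi^{-1}(s)$ using surjectivity; then $\tilde s=\widetilde{\pi(f)}=\pi(\tilde f)=\pi(f)=s$ by equivariance and the hypothesis. Hence $u\mapsto\tilde u$ fixes every element of $\mathcal{S}'$, so by Theorem~\ref{thm-ubs-ffl} it is the identity. I expect no serious obstacle: the only point requiring care is the equivariance of $\pi$, namely checking that the Galois action on the unbiased classes (defined through $\Qbar[z]$ representatives) agrees under $\pi$ with the coefficientwise action on biased polynomials. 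Once Proposition~\ref{prp-ubt-to-bt}(3) is granted this is routine, and the surjectivity is precisely the content of the biasing construction.
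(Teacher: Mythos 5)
Your proposal is correct and is essentially the paper's own argument: the paper's proof likewise reduces to Theorem~\ref{thm-ubs-ffl} via the forget-bias map $\mathcal{S}\to\mathcal{S}'$, noting that it is surjective and compatible with the Galois action. You have simply spelled out the formal deduction (fixing all of $\mathcal{S}$ forces fixing all of $\mathcal{S}'$ by surjectivity and equivariance) in more detail than the paper does.
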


\begin{proof}
The forget-bias map $\mathcal{S}\to\mathcal{S}'$ is surjective, and
consistent with the Galois action.  The Galois action is faithful on
$\mathcal{S}'$, by Theorem~\ref{thm-ubs-ffl}.
\end{proof}

\aaa[\label{s-sp-gi}Galois invariants.]
Recall (see Proposition~\ref{prp-mp-cgi}) that each $a\in\Qbar$ has a
minimal polynomial $g_a(z)\in\mathbb{Q}[z]$, and that $g_a$ is a
complete Galois invariant for $a$.  Let $f(z) = a_0 + a_1z + \ldots +
a_nz^n$ be a polynomial in $\Qbar[z]$.  Clearly, the sequence
$g_i(z)\in\mathbb{Q}[z]$ of the minimal polynomials of the
coefficients $a_i$ is a Galois invariant of $f$.  Thus we obtain many
Galois invariants of biased Shabat polynomials.  Of course, for $f$
biased Shabat $a_0=0$ and $a_1=1$, and so $g_0$ and $g_1$ are constant
on $\mathcal{S}$.

On $\Qbar[z]$, the sequence of minimal polynomials is not a complete
Galois invariant. For example, all coefficients of $f_-(z) =
\sqrt{2}(1 - z)$ and $f_+(z) = \sqrt{2}(1 + z)$ have $g(z)=z^2-2$ as
their minimal polynomial. But $f_-(1) = 0 \in \mathbb{Q}$ while
$f_+(1) = 2\sqrt{2}\notin\mathbb{Q}$.  The author suspects that there
are distinct $f_1,f_2\in\mathcal{S}_n$ with $g_{1,r}(z) = g_{2,r}(z)$
for all $r\leq n$.

\aaa[Biased plane trees.]
Recall that unbiased Shabat polynomials correspond to bipartite plane
trees.  For biased polynomials, we want a similar corresponding
definition.  Let $f$ be biased Shabat.  Consider
$T_f=f^{-1}([-1,1])$. By forgetting the bias we see, as before, that
$T_f$ is a plane tree with a bipartite colouring of the
vertices. Because $f(0) = 0\in [-1, 1]$, we have $0\in T_f$.  In fact,
each of the $n$ edges has an interior point $c$ such that $f(c) = 0$,
and so $0$ lies on a single edge $e_f$ of $T_f$.

Thus, even in the unbiased case, the choice of a root of $f(z) = v_0$
is equivalent to the choice of an edge in $T_f$.  If $f$ is biased
then $f(0)=0$ is the chosen root.  This gives rise to:

\begin{definition}
A \defemph{biased plane tree} $T$ is a bipartite plane tree with a
chosen edge $e_T$.
\end{definition}

Now draw the tree, and an arrow, black vertex to white, on the chosen
edge. This, by itself, is enough to determine the colour of all other
vertices of the tree (see Figure~\ref{fig-bpt-bpt}), and we still have
a chosen edge.  Thus, the previous definition is equivalent to:

\begin{definition}
A \defemph{biased plane tree} is a plane tree with an arrow (the bias)
along one edge.
\end{definition}

We can now state the biased analogue of Theorem~\ref{thm-ubt-bij}.

\begin{notation}
$\mathcal{T}_n$ is all biased plane trees with $n$ edges, and
  $\mathcal{T}=\bigcup\mathcal{T}_n$.
\end{notation}

\begin{theorem}
\label{thm-bt-bij}
The map $f\mapsto T_f=f^{-1}([-1, -1])$ induces a bijection between
$\mathcal{S}_n$ and $\mathcal{T}_n$.
\end{theorem}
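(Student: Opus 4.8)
The plan is to bootstrap from the unbiased bijection (Theorem~\ref{thm-ubt-bij}) using the biasing construction of \S\ref{s-sh-choose}. Throughout I regard a biased Shabat polynomial $f$ as unbiased with vertex pair $(-1,+1)$, so that $v_0=0$. Forgetting the bias gives a forgetful map $p\colon\mathcal{S}_n\to\mathcal{S}'_n$, and forgetting the arrow gives $q\colon\mathcal{T}_n\to\mathcal{T}'_n$. The map $F\colon f\mapsto T_f=f^{-1}([-1,+1])$ fits into a square with $p$, $q$, and the unbiased bijection $\beta\colon\mathcal{S}'_n\to\mathcal{T}'_n$, and this square commutes by construction, since the underlying bipartite plane tree of $T_f$ is exactly $\beta(p(f))$. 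As $\beta$ is already a bijection, it suffices to show that $F$ carries each fibre of $p$ bijectively onto the corresponding fibre of $q$.

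First I would pin down the forward map. For $f\in\mathcal{S}_n$, Theorem~\ref{thm-ubt-bij} gives that $T_f$ is a bipartite plane tree with $n$ edges. As noted when biased plane trees were introduced, $f(0)=0=v_0$ forces $0$ to lie in the interior of a single edge $e_f$; this distinguished edge is the bias, so $T_f\in\mathcal{T}_n$ and $F$ is well defined. For surjectivity I would take a biased plane tree $T$ with underlying bipartite plane tree $T'$ and chosen edge $e$, use $\beta^{-1}$ to select an unbiased Shabat $g$ with vertex pair $(-1,+1)$ and $T_g=T'$, and observe that $e$ singles out the unique interior point $u$ of $e$ with $g(u)=0$, i.e.\ a root of $g(z)=v_0$. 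Feeding $g$ and $u$ into Proposition~\ref{prp-uni-cc} and Proposition~\ref{prp-ubt-to-bt} produces a biased $f_1\in\mathcal{S}_n$ in the class of $g$ whose distinguished edge is $e$, so $F(f_1)=T$.

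For injectivity, suppose $F(f_1)=F(f_2)$. Applying $q$, the underlying bipartite plane trees agree, so by Theorem~\ref{thm-ubt-bij} we have $f_2=\psi\circ f_1\circ\eta$ for changes of coordinates $\psi,\eta$; since the bias fixes the colours, $\psi$ must fix the ordered pair $(-1,+1)$, and solving $\psi(-1)=-1$, $\psi(+1)=+1$ forces $\psi=\mathrm{id}$. Both $f_1$ and $f_2$ are then biased representatives of the same class whose distinguished edges coincide, so each is biased at the root corresponding to that common edge; the uniqueness clause of Proposition~\ref{prp-uni-cc} then forces $\eta=\mathrm{id}$, whence $f_1=f_2$.

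The hard part will be the injectivity step, specifically the passage from ``the two biased plane trees are equal as combinatorial objects'' (equality being up to relabelling) to ``$\eta$ fixes the distinguished root''. The content here is a rigidity statement: a plane tree equipped with an oriented edge has no nontrivial automorphisms, so the relabelling identifying $F(f_1)$ with $F(f_2)$ is forced and pins down which root each $f_i$ is biased at. Making this rigidity explicit, and checking that it matches the change-of-coordinates uniqueness of Proposition~\ref{prp-uni-cc}, amounts to identifying the two fibres in question: the fibre of $p$ over $[g]$ is the set of roots of $g(z)=v_0$ modulo the change-of-coordinates stabiliser of $g$, while the fibre of $q$ over $T'$ is the set of $\mathrm{Aut}(T')$-orbits of edges, and $\beta$ identifies these symmetry groups so that roots correspond to edges. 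Verifying this compatibility carefully is where the real work lies.
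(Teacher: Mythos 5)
Your proposal follows essentially the same route as the paper's proof: regard a biased $f$ as an unbiased $f$ together with a root of $f(z)=v_0$, match that root with an edge of $T_f$, and lift the unbiased bijection of Theorem~\ref{thm-ubt-bij}. The paper's version is a three-line sketch that silently passes over the point you correctly flag as the real content --- identifying the fibre of the forget-bias map (roots of $f(z)=v_0$ modulo the change-of-coordinates stabiliser of $f$) with the fibre of the forget-arrow map (edges modulo $\mathrm{Aut}(T')$), which works because a connected bipartite plane tree with a distinguished edge is rigid --- so your write-up is the more careful rendering of the same argument.
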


\begin{proof}
Think of a biased $f$ as an unbiased $f$, together with a root $c$ of
the equation $f(z)=v_0$.  Now use the bijection between
$\mathcal{S}'_n$ and $\mathcal{T}'_n$ provided in
Theorem~\ref{thm-ubt-bij}.  We can use $c$ to select an edge on $T_f$,
and vice versa. This lifts the bijection to $\mathcal{S}_n$ and
$\mathcal{T}_n$.
\end{proof}

\aaa[Rooted plane trees and Catalan numbers.]
A biased plane tree is the same as a rooted plane tree, as used in
linguisitics and computer science for parse and syntax trees, except
that a rooted plane tree need not have any edges.  Thus, \emph{biased
  plane tree} is a shorthand for \emph{rooted plane tree with at least
  one edge}.  For us, the black-white alternation of vertices along
edges is important, as is the presently mysterious Galois action.

It is well known that the number of rooted plane trees with $n$ edges
is the $n$-th Catalan number.  As $\Gamma$ acts faithfully on
$\mathcal{T}$, it also acts faithfully on any set that is in bijection
with rooted plane trees.  There are many interesting examples of
such~\cite{rstan-cn}.  This will be explored further
in~\cite{jfine-rpt-an}.

\section{Dessins}

\aaa[Overview.]
In the previous section we introduced bias to solve a geometric
problem, namely that unbiased $T$ determines $f$ only up to change of
coordinates.  In this section we add bias to solve a combinatorial
problem, namely that the Cartesian product of two trees is not a tree.
To do this we also have to generalise tree to dessin.  We use the same
concept of bias.  This process puts a Galois invariant lattice
structure on the set of biased dessins.  We can use this
(see~\S\ref{s-cnc-tgi}) to define new Galois invariants from old.

\aaa[Unbiased dessins.]
Recall (Proposition~\ref{prp-bpt-spp}) that a bipartite plane tree is
equivalent to an irreducible pair $P=(\alpha, \beta)$ of permutations,
such that $\alpha\beta$ has at most one orbit.  Sets have a Cartesian
product, and something similar can be done for pairs of permutations.

\begin{definition}
For pairs of permutations $P_1$ and $P_2$ the \defemph{product}
$P_1\times P_2$ has edge set $E_1\times E_2$ and permutations
$\alpha((e_1, e_2)) = (\alpha_1(e_1), \alpha_2(e_2))$, and similarly
for $\beta$.
\end{definition}

The product $T = R\times S$ of two pairs of permutations is also a
pair of permutations.  Even when $R$ and $S$ are irreducible, $T$ may
be reducible.  For example, $R\times R$ is reducible if $R$ has two or
more edges.  This is because its diagonal $\{(e,e)| e \in R\}$ is
irreducible, but is not the whole of $R\times R$.  However, $R\times
S$ always decomposes into irreducibles, each of which is an
$\langle\alpha,\beta\rangle$ orbit.

We generalise the concept of unbiased plane tree as follows:

\begin{definition}
An \defemph{unbiased dessin} is an irreducible pair $D$ of
permutations, where $D$ has at least one edge.
\end{definition}

Note that each product of unbiased dessins, which may be reducible,
has a unique decomposition into unbiased dessins.


\begin{notation}
$\mathcal{D}'_n$ is all unbiased dessins with $n$ edges, and
  $\mathcal{D}'=\bigcup\mathcal{D}'_n$.
\end{notation}

\aaa[Biased dessins.]
We have just seen that the product $T=R\times S$ of two unbiased
dessins is sometimes reducible, and so not a dessin.  We will choose a
component of $T$ as follows:

\begin{definition}
A \defemph{biased dessin} $D$ is an irreducible pair of permutations,
together with a chosen edge $e_D$ of $D$.
\end{definition}

\begin{notation}
$\mathcal{D}_n$ is all biased dessins with $n$ edges, and
  $\mathcal{D}=\bigcup\mathcal{D}_n$.
\end{notation}

\begin{definition}
\label{dfn-lat-join}
The \defemph{join} $T=R\vee S$ of two biased dessins is the
$\langle\alpha_T,\beta_T\rangle$ orbit of $(e_R, e_S)$ in the product
$R\times S$, with chosen edge $e_T = (e_R, e_S)$.
\end{definition}

\aaa[Morphisms.]
Suppose $R$ and $S$ are pairs of permutations.  A \emph{morphism}
$\psi:R\to S$ is a set map $\psi:E_R\to E_S$ such that
$\psi\circ\alpha_R = \alpha_S\circ\psi$ and similarly for $\beta$.  We
use the same concept for unbiased dessins.

\begin{definition}
A \defemph{morphism} $\psi:R\to S$ of biased dessins is a pair of
permutations morphism, call it $\psi$, such that $\psi(e_R) = e_S$.
\end{definition}

Each biased dessin is $\langle\alpha,\beta\rangle$ irreducible, and
morphisms respect the chosen edge.  From this it easily follows that:

\begin{lemma}
For any two biased dessins $R$ and $S$ there is at most one morphism
$\psi:R\to S$.
\end{lemma}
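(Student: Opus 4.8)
The plan is to exploit the irreducibility of $R$, which is exactly the statement that $E_R$ is a single orbit of $\langle\alpha_R,\beta_R\rangle$. This means every edge of $R$ is reachable from the chosen edge $e_R$ by applying a finite word in $\alpha_R,\beta_R$ and their inverses. Since a morphism must intertwine the permutations and must send $e_R$ to $e_S$, its value will be forced on every edge reachable from $e_R$, i.e.\ on all of $E_R$. So rather than construct a morphism, I would show that any two candidates must coincide.

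First I would record a small preliminary: a pair-of-permutations morphism commutes not only with $\alpha$ and $\beta$ but with their inverses. From $\psi\circ\alpha_R=\alpha_S\circ\psi$, composing on the left with $\alpha_S^{-1}$ and on the right with $\alpha_R^{-1}$ gives $\alpha_S^{-1}\circ\psi=\psi\circ\alpha_R^{-1}$, and likewise for $\beta$. Thus $\psi$ intertwines every element of the generated groups.

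Next, let $\psi_1,\psi_2\colon R\to S$ be two morphisms of biased dessins, and consider the agreement set
\[
A=\{\,e\in E_R : \psi_1(e)=\psi_2(e)\,\}.
\]
By the definition of a morphism of biased dessins, $\psi_1(e_R)=e_S=\psi_2(e_R)$, so $e_R\in A$ and $A$ is non-empty. Using the preliminary, if $e\in A$ then $\psi_1(\alpha_R(e))=\alpha_S(\psi_1(e))=\alpha_S(\psi_2(e))=\psi_2(\alpha_R(e))$, so $\alpha_R(e)\in A$; the same computation with $\alpha_R^{-1}$, $\beta_R$, $\beta_R^{-1}$ shows that $A$ is stable under all four maps, hence under the whole group $\langle\alpha_R,\beta_R\rangle$. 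Therefore $A$ is a non-empty union of $\langle\alpha_R,\beta_R\rangle$-orbits containing $e_R$. Since $R$ is irreducible, $E_R$ is a single such orbit, so $A=E_R$ and $\psi_1=\psi_2$.

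I do not expect any genuine obstacle here: the argument is essentially the standard rigidity of pointed connected covering data. The only points that need care are the inverse-commuting step and the explicit appeal to irreducibility as \emph{single orbit containing the chosen edge}; everything else is a routine orbit-stability check.
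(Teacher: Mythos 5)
Your proof is correct and follows exactly the route the paper indicates: the paper derives the lemma from the two observations that each biased dessin is $\langle\alpha,\beta\rangle$-irreducible and that morphisms respect the chosen edge, which is precisely your agreement-set argument spelled out in full. Your careful handling of the inverses and of irreducibility as ``single orbit containing $e_R$'' simply supplies the details the paper leaves to the reader.
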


\begin{notation}
For biased dessins we write $R\to S$ if there is a morphism $\psi:R\to
S$.
\end{notation}

Thus we can think of $R\to S$ either as a boolean relation between $R$
and $S$, or as the combinatorial structure that makes this relation
true.  Clearly, $R\to S$ is a partial order.  In \cite{jfine-dlbd} we
will prove:

\begin{theorem}
The relation $R\to S$ gives $\mathcal{D}$ a lattice structure, with
join as in Definition~\ref{dfn-lat-join}.
\end{theorem}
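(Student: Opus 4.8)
The plan is to pass to the standard dictionary between biased dessins and finite-index subgroups of the free group $F=\langle a,b\rangle$, which is exactly the covering-space picture alluded to in \S\ref{s-bd-cs-pi}. A biased dessin $D$ makes $E_D$ a transitive $F$-set, via $a\mapsto\alpha_D$ and $b\mapsto\beta_D$ (transitive because $D$ is irreducible), with distinguished basepoint $e_D$; set $H_D=\mathrm{Stab}_F(e_D)$, a subgroup of index $\deg D$. Conversely each finite-index $H\le F$ yields the pointed transitive $F$-set $F/H$. As pointed transitive $G$-sets correspond bijectively (not merely up to conjugacy) to subgroups, this is a bijection between biased dessins of degree $n$ and index-$n$ subgroups of $F$, compatible with the relabelling equivalence. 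I would first record this, together with the translation of morphisms: an equivariant pointed map $R\to S$ is forced to send $g\cdot e_R\mapsto g\cdot e_S$, so it exists iff $H_R\subseteq H_S$ and is then unique, reproving the lemma that there is at most one morphism between biased dessins.

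With this in hand the order becomes inclusion of stabilizers. I would fix the convention that makes Definition~\ref{dfn-lat-join} a join: declare $R\le S$ to mean $S\to R$ (equivalently $H_S\subseteq H_R$), so that covers sit above their quotients. The key computation is that in the diagonal action on $R\times S$ the stabilizer of $(e_R,e_S)$ is $H_R\cap H_S$; hence the orbit of $(e_R,e_S)$, pointed there, is the dessin attached to $H_R\cap H_S$. Both projections restrict to morphisms $R\vee S\to R$ and $R\vee S\to S$, so $R\vee S$ is an upper bound of $\{R,S\}$. For minimality, any common upper bound $U$ comes with morphisms $\phi:U\to R$ and $\chi:U\to S$; the pair $(\phi,\chi)$ defines an equivariant map $U\to R\times S$ carrying $e_U$ to $(e_R,e_S)$, whose image therefore lies in the orbit $R\vee S$, and uniqueness is again the lemma. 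This shows $R\vee S$ is the least upper bound and identifies it with Definition~\ref{dfn-lat-join}.

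It remains to produce meets. The clean statement is that $H_R\cap H_S$ and $\langle H_R,H_S\rangle$ are again finite index ($\langle H_R,H_S\rangle\supseteq H_R$, and the intersection has index at most $[F:H_R]\,[F:H_S]$), so the subgroup lattice restricts to a lattice on finite-index subgroups, giving meet $\langle H_R,H_S\rangle$. A route that stays combinatorial and uses only the material above: $\mathcal D$ has a least element, the one-edge dessin $\mathbf 1$ (every $D\to\mathbf 1$), and each dessin $R$ has only finitely many quotients, equivalently finitely many dessins below it, because the subgroups between $H_R$ and $F$ are finite in number. The common lower bounds of $R$ and $S$ then form a finite, nonempty set, whose join (an iterated binary join, already constructed) is their meet. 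Either way $\mathcal D$ is a lattice.

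The main obstacle, I expect, is not the join, which the projections and the universal property handle almost formally, but pinning down the meet and the order convention. The word ``join'' in the theorem forces the covering orientation of $\to$, the opposite of the naive inclusion of edge-maps, and this must be stated before ``join'' is meaningful. The meet is the genuinely new content: Definition~\ref{dfn-lat-join} supplies only the join combinatorially, so one must either invoke the subgroup $\langle H_R,H_S\rangle$ or argue existence through finiteness of down-sets as above; giving the meet an explicit combinatorial description (the largest common quotient of the two dessins) is the part I would expect to require the most care, and is presumably what \cite{jfine-dlbd} carries out in full.
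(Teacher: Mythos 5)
Your proposal is correct and follows essentially the route the paper itself indicates: the paper defers the full proof to \cite{jfine-dlbd}, but its outline in \S\ref{s-bd-cs-pi} and the following subsection --- pointed covers of $\mathbb{C}\setminus\{-1,+1\}$ corresponding to finite-index subgroups of $\pi_1(\hat{X})$ (a free group on two generators), with join given by $H_R\cap H_S$ and meet by $\langle H_R,H_S\rangle$ --- is exactly the dictionary you flesh out. Your identification of the stabilizer of $(e_R,e_S)$ with $H_R\cap H_S$, the orientation of the order, and the finiteness argument for meets fill in the deferred details correctly.
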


\aaa[Marked Belyi pairs.]
Extending the bijection between $\mathcal{S}$ and $\mathcal{T}$, there
is a concept of marked Belyi pair such that:

\begin{notation}
$\mathcal{B}_n$ is all marked Belyi pairs of degree $n$, and
  $\mathcal{B}=\bigcup\mathcal{B}_n$.
\end{notation}

\begin{theorem}
$\Gamma$ acts on $\mathcal{B}_n$.  The action on $\mathcal{B}$ is
  faithful.
\end{theorem}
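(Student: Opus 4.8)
The plan is to follow the template already used for biased Shabat polynomials, where we saw that $\Gamma$ acts on $\mathcal{S}_n$ (by conjugating coefficients) and that the action on $\mathcal{S}$ is faithful. A marked Belyi pair consists of a curve $X$ and a Belyi map $f : X \to \mathbb{P}^1$, ramified only over $0, 1, \infty$, together with marking data playing the role of bias; all of this is cut out by polynomial equations, so an element $u \mapsto \tilde{u}$ of $\Gamma$ should act by conjugating these equations coefficient-wise.

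First I would verify that every marked Belyi pair is defined over $\Qbar$, so that the conjugate pair is again of the same kind and the action is even defined. This is the Belyi-pair analogue of the proposition that a biased Shabat $f$ lies in $\Qbar[z]$. The mechanism is deeper here: there are only finitely many biased dessins of each degree $n$, so the associated pairs are rigid, and rigidity (Belyi's theorem together with Weil descent) forces both $X$ and $f$ to admit models over $\Qbar$. The marking data can likewise be taken in $\Qbar$, since the distinguished values lie in $\{0,1,\infty\}$ and the base point solves a $\Qbar$-equation, exactly as the root $u$ did in Proposition~\ref{prp-ubt-to-bt}. I expect this descent step to be the main obstacle, and I would quote it from the standard theory rather than reprove it.

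Next I would check that conjugation carries $\mathcal{B}_n$ to itself. The Belyi condition is Galois invariant because $\Gamma$ fixes $0$, $1$, and $\infty$ and, by the covariance of $f \mapsto f'$ noted before Proposition~\ref{prp-mp-cgi}, sends critical points to critical points and critical values to critical values; the normalisation conditions of the marking are Galois invariant by the same case-by-case verification used to show that $\Gamma$ acts on $\mathcal{S}_n$. Since conjugation preserves degree and is compatible with composition of field automorphisms, this yields the desired action on each $\mathcal{B}_n$, and hence on $\mathcal{B}$.

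Finally, for faithfulness I would reduce to the case already settled. Biased Shabat polynomials sit inside marked Belyi pairs as the genus-zero pairs whose map is a polynomial with $\infty$ totally ramified: replacing $f$ by $(f+1)/2$ moves the vertex values $-1, +1$ to $0, 1$ over $\mathbb{Q}$, so the inclusion $\mathcal{S} \hookrightarrow \mathcal{B}$ is Galois-equivariant. If $u \mapsto \tilde{u}$ acts trivially on all of $\mathcal{B}$, it then acts trivially on the image of $\mathcal{S}$, and so $\tilde{u} = u$ by the faithfulness of the $\Gamma$-action on $\mathcal{S}$ proved above (which itself rests on Theorem~\ref{thm-ubs-ffl}). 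Hence the action on $\mathcal{B}$ is faithful.
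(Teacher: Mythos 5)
The paper does not actually prove this theorem: it defers both the definition of \emph{marked Belyi pair} and the proof to the forthcoming reference \cite{jfine-dlbd}, saying only that the proof ``can be done, as in Theorem~\ref{thm-bt-bij}, by adding bias to the corresponding unbiased result.'' Your sketch---descent over $\Qbar$ via Belyi's theorem and rigidity, Galois invariance of the Belyi and marking conditions checked coefficient-wise, and faithfulness via a Galois-equivariant copy of $\mathcal{S}$ inside $\mathcal{B}$ resting ultimately on Theorem~\ref{thm-ubs-ffl}---is precisely that strategy fleshed out, and matches how the paper handled the analogous statements for $\mathcal{S}_n$ and $\mathcal{S}$.
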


\begin{theorem}
The map $f\mapsto D_f=f^{-1}([-1, -1])$ induces a bijection between
$\mathcal{B}_n$ and $\mathcal{D}_n$.
\end{theorem}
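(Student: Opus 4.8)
The plan is to imitate the proof of Theorem~\ref{thm-bt-bij}, replacing the plane-tree bijection of Theorem~\ref{thm-ubt-bij} by the classical (unbiased) Grothendieck correspondence between Belyi pairs and unbiased dessins. First I would import from the existing theory (see \cite{GGD}, \cite{LZ}, or \cite{jfine-rpt-an}) the bijection between isomorphism classes of unmarked Belyi pairs $(X,\beta)$ of degree $n$ and unbiased dessins $D'\in\mathcal{D}'_n$, under which $(X,\beta)\mapsto\beta^{-1}([-1,1])$. Here $\beta:X\to\mathbb{P}^1$ is a map from a connected Riemann surface whose branch values lie in $\{-1,+1,\infty\}$, and connectedness of $X$ corresponds to irreducibility of the associated pair of permutations (the connectedness criterion for $G_P$ recorded earlier). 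There are forgetful maps $\mathcal{B}_n\to\{\text{Belyi pairs}\}$ and $\mathcal{D}_n\to\mathcal{D}'_n$ discarding the marking and the chosen edge respectively, and the whole task reduces to matching the marking data on the two sides.

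Second, and this is the geometric heart, I would identify what a marking records. Since $0$ is not a branch value of $\beta$, the map is an unramified $n$-sheeted covering over a neighbourhood of $0$, so the fibre $\beta^{-1}(0)$ consists of exactly $n$ distinct points, precisely as the equation $f(z)=v_0$ had $n$ distinct roots in \S\ref{s-sh-choose}. Each point of $\beta^{-1}(0)$ is the midpoint of a unique arc of $D_\beta=\beta^{-1}([-1,1])$, that is, of a unique edge of the dessin. Hence a choice of point of $\beta^{-1}(0)$ (together with whatever jet normalisation plays here the role of $f(0)=0$, $f'(0)=1$) is the same datum as a choice of edge $e_D$, which is exactly the bias adjoined in passing from $\mathcal{D}'_n$ to $\mathcal{D}_n$. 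This correspondence of markings is compatible with the unbiased bijection, so it lifts that bijection to the asserted bijection $\mathcal{B}_n\leftrightarrow\mathcal{D}_n$; injectivity and surjectivity are then read off exactly as in the closing lines of the proof of Theorem~\ref{thm-bt-bij}.

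The hard part will not be the combinatorial bookkeeping but the two places where the higher-genus setting genuinely differs from the tree case. First, the underlying unbiased Belyi--dessin correspondence is the deep input: reconstructing $(X,\beta)$ from a dessin needs the Riemann existence theorem together with Belyi's theorem to descend the complex structure to $\Qbar$, and it is this step (not stated in the excerpt, only its tree shadow Theorem~\ref{thm-ubt-bij}) that I must take wholesale from the literature. Second, on a general Riemann surface there is no ambient linear coordinate, so the normalisation $f'(0)=1$ has no literal analogue; the definition of \emph{marked} Belyi pair must instead rigidify the chosen point of $\beta^{-1}(0)$ in a canonical, Galois-compatible way, and I would need to check that this rigidification leaves no residual automorphism of $(X,\beta)$ fixing the mark. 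Establishing this rigidity is what upgrades the natural surjection to a genuine bijection, and is the step I expect to require the most care.
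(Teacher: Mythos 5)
Your proposal follows exactly the route the paper indicates: it states only that ``the proof can be done, as in Theorem~\ref{thm-bt-bij}, by adding bias to the corresponding unbiased result,'' deferring both the actual argument and the definition of \emph{marked Belyi pair} to \cite{jfine-dlbd}, and your lifting of the unbiased Belyi--dessin correspondence by matching the marked point of $\beta^{-1}(0)$ with the chosen edge is precisely that strategy, worked out in more detail than the paper itself provides. You also correctly locate the genuine open ends --- the unspecified definition of the marking and its rigidification on a general Riemann surface --- which the paper leaves to the forthcoming reference rather than resolving here.
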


\begin{theorem}
\label{thm-ls-gi}
The lattice structure on $\mathcal{B}$ is Galois invariant under this
bijection.
\end{theorem}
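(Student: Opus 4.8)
The plan is to exploit the contrast between the two sides of the bijection: on $\mathcal{B}$ the action of $\Gamma$ is transparent (each $u\mapsto\tilde u$ acts on the coefficients of the defining maps), whereas on the purely combinatorial set $\mathcal{D}$ the induced action is opaque. To say that the lattice structure on $\mathcal{B}$ is Galois invariant is to say that each element of $\Gamma$ preserves both the order $R\to S$ and the join $R\vee S$ of Definition~\ref{dfn-lat-join}. Since the action on $\mathcal{D}$ is defined only through the bijection and is otherwise mysterious, there is no hope of a direct combinatorial argument; the whole strategy is to reinterpret each lattice operation on the Belyi side as a construction defined over $\Qbar$, and then to observe that any construction over $\Qbar$ is carried by $\Gamma$ to its conjugate.

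First I would treat the order. A morphism of biased dessins $\psi\colon R\to S$ should correspond, under the bijection, to a morphism of the underlying marked Belyi pairs: a map $\phi\colon X_R\to X_S$ of covers with $f_S\circ\phi=f_R$ carrying the marked point of $R$ to that of $S$ (here $X_R,f_R$ denote the cover underlying $R$). The decisive input is the uniqueness of morphisms between biased dessins: because $\psi$ is the \emph{only} morphism $R\to S$, its geometric realisation $\phi$ is rigid, and a standard descent argument then shows $\phi$ is defined over the subfield of $\Qbar$ generated by the coefficients of $f_R$ and $f_S$. Applying $u\mapsto\tilde u$ to the identity $f_S\circ\phi=f_R$ gives $\tilde f_S\circ\tilde\phi=\tilde f_R$, a morphism $\tilde R\to\tilde S$; running the same argument with the inverse automorphism yields the converse, so $R\to S$ if and only if $\tilde R\to\tilde S$. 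Thus the order is Galois invariant.

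For the join I would give it an order-theoretic characterisation and then invoke the previous paragraph. By construction (Definition~\ref{dfn-lat-join}) the two projections restrict to morphisms $R\vee S\to R$ and $R\vee S\to S$, and any biased dessin $U$ admitting morphisms to both $R$ and $S$ maps by $(g,h)$ into the orbit of $(e_R,e_S)$, since $U$ is irreducible and $(g,h)(e_U)=(e_R,e_S)$; this exhibits a unique morphism $U\to R\vee S$. So $R\vee S$ is the universal common refinement, a property phrased solely in terms of $\to$, which we have just shown is Galois invariant. Hence $\widetilde{R\vee S}$ and $\tilde R\vee\tilde S$ satisfy the same universal property and must coincide. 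Equivalently, and more explicitly, the product of pairs of permutations is realised on the Belyi side by the fibre product of covers over $\mathbb{P}^1$, and the join by its connected component through the marked point lying over $v_0=0$; fibre products commute with Galois conjugation, and the marked point is singled out by the conditions $f(0)=0$, $f'(0)=1$, which are Galois invariant, so conjugation carries the chosen component to the chosen component. The renormalisation of the resulting composite presents no difficulty, since biasing is Galois covariant by Proposition~\ref{prp-ubt-to-bt}.

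The main obstacle is the dictionary underlying the second paragraph: the precise claim that combinatorial morphisms of biased dessins are in bijection with factorisations $f_R=f_S\circ\phi$ of marked Belyi pairs over $\Qbar$, respecting markings. Granting this, the rigidity argument forcing $\phi$ down from $\mathbb{C}$ to $\Qbar$ is what makes the whole proof work, and it is exactly here that the uniqueness of dessin morphisms is indispensable. A secondary technical point is to check that the two descriptions of the join---the combinatorial orbit of $(e_R,e_S)$ under $\langle\alpha,\beta\rangle$ and the geometric component through the marked fibre point---agree \emph{as marked objects}, so that the clean order-theoretic reduction and the explicit fibre-product computation genuinely describe the same operation.
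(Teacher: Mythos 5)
Your proposal is sound as a sketch, and it is worth noting that the paper itself does not actually prove Theorem~\ref{thm-ls-gi}: it defers the proof to \cite{jfine-dlbd} and gives only an outline in \S\ref{s-bd-cs-pi}. Measured against that outline, your route differs in a genuine way. The paper's argument runs entirely through covering-space theory: a biased dessin $R$ is a pointed finite cover $\hat{Y}_R\to\hat{X}$ of $X=\mathbb{C}\setminus\{-1,+1\}$ with base point $0$, the relation $R\to S$ becomes the existence of a pointed cover map over $\hat{X}$, equivalently containment of the corresponding subgroups of $\pi_1(\hat{X})$ (Theorem~\ref{thm-cpc-sg-pi}), the join becomes the subgroup intersection $H_R\cap H_S$, and Galois invariance is then pulled from ``standard results that produce a Belyi pair from a finite cover.'' You instead work on the algebraic side throughout: you identify a dessin morphism with a factorisation $f_R=f_S\circ\phi$ of marked Belyi maps, use the uniqueness of dessin morphisms to force $\phi$ down to $\Qbar$ by rigidity, and conjugate the factorisation directly; you then characterise $R\vee S$ by its universal property in the poset $(\mathcal{B},\to)$ (correctly verified from Definition~\ref{dfn-lat-join} via the map $(g,h)$ into the orbit of $(e_R,e_S)$), so that invariance of the join follows formally from invariance of the order. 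What the paper's route buys is that the join is computed structurally ($H_R\cap H_S$), which feeds into the later decoration of the lattice; what your route buys is that the descent step is made explicit rather than hidden in ``standard results,'' and the reduction of the join to the order is cleanly order-theoretic. Both sketches ultimately rest on the same unproved-here dictionary --- that finite pointed covers of $\mathbb{C}\setminus\{-1,+1\}$ and the maps between them are algebraic and defined over $\Qbar$ --- and you are right to flag that dictionary, together with the agreement of the combinatorial orbit and the marked fibre-product component, as the real content to be supplied.
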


The proof of these results, and the definition of marked Belyi pair,
will be given in \cite{jfine-dlbd}. The proof can be done, as in
Theorem~\ref{thm-bt-bij}, by adding bias to the corresponding unbiased
result.

\aaa[\label{s-cnc-tgi}The tower of Galois invariants.]
We can use the lattice structure on $\mathcal{B}$ to produce new
Galois invariants from old.  Let $h:\mathcal{B}\to\mathcal{V}$ be any
Galois invariant, such as the degree (number of edges), or the
partition triple (see Proposition~\ref{prp-pt-gi}).  If
$R\in\mathcal{B}$ is Galois invariant then so is the function
$X\mapsto h(R\vee X)$.  Now suppose $S\subset\mathcal{B}$ is a Galois
invariant subset.  Using formal sums (see below) we have that
\begin{equation}
\label{eqn-gi-fs}
h_S(X) = \sum\nolimits_{Y\in S}\> [h(Y\vee X)]
\end{equation}
is also Galois invariant.  Something similar can be done with
$S\subset \mathcal{B}\times \mathcal{B}$ and so on.

\begin{definition}
A \defemph{formal sum} (on a set $\mathcal{V}$ of \defemph{values}) is
a map $m:\mathcal{V}\to\mathbb{Z}$ that is zero outside a finite
subset of $\mathcal{V}$.
\end{definition}

\begin{notation}
We write $m:\mathcal{V}\to\mathbb{Z}$ as $\sum m(v)[v]$, perhaps
omitting terms where $m(v)=0$.
\end{notation}

Conversely, if $h:\mathcal{B}\to \mathcal{V}$ is a Galois invariant
and $R\in\mathcal{B}$ then
\begin{equation}
\label{eqn-gi-ss}
S_R = \{Y | h(Y) = h(R) \} \subset \mathcal{B}
\end{equation}
is also Galois invariant, and so can be used as in the previous
paragraph.

In this way, by alternating Galois invariant maps
$\mathcal{B}\to\mathcal{V}$ as in (\ref{eqn-gi-fs}), and finite
subsets $S\subset\mathcal{B}$ as in (\ref{eqn-gi-ss}), we can
construct a tower of Galois invariants.  For completeness, this
process should be extended to include $\mathcal{B}\times\mathcal{B}$
and so on.  The process produces formal sums of formal sums and so on.
One wants as many invariants as possible, while at the same time
managing the duplication and redundancy that results.  These matters
will be further discussed in~\cite{jfine-gibd}.

\aaa[\label{s-bd-cs-pi}Covering spaces and $\pi_1(\hat{X})$.]
From now until the end of this section we will rely on some concepts
and results from topology, which we will use to motivate the
definition of the decorated lattice $\mathcal{L}$ and to outline the
proof of its Galois invariance.  This results in many new invariants,
to which the just described tower construction can be applied.  What
follows is intended for experts in dessins.  Others may find it hard.

A map $f:Y\to X$ of topological spaces is a \emph{covering map} if
$f^{-1}(U)$ is the disjoint union of copies of $U$, for small enough
open subsets $U$ of $X$.  The Shabat condition ensures that
$f:\mathbb{C}\to\mathbb{C}$ is a covering map away from $\vb$ and
$\vw$.

The \emph{fundamental group} $\pi_1(X, x_0)$ consists of all
continuous maps $p:[0,1]\to X$ with $p(0)=p(1)=x_0$, considered up to
homotopy equivalence.  Following first path $p$ and then path $q$
gives the group law on $\pi_1(X, x_0)$.  This definition relies on the
choice of a base point $x_0$ (and each path from $x_0$ to $x_1$
induces an isomorphism between $\pi_1(X, x_0)$ and $\pi_1(X, x_1)$).
The subgroups of $\pi_1(X, x_0)$ are related to the covers of $X$.

A \emph{pointed topological space} $\hat{X}$ is a topological space
$X$ together with a base point $x_0$.  We let $\pi_1(\hat{X})$ denote
$\pi_1(X, x_0)$.  Suppose $f:Y\to X$ is a covering map, with
$f(y_0)=x_0$.  Write $\hat{Y}$ for the pointed topological space $(Y,
y_0)$ and similarly for $\hat{X}$.  We will say that
$f:\hat{Y}\to\hat{X}$ is a \emph{pointed covering map}.

\begin{theorem}
\label{thm-cpc-sg-pi}
Provided $\hat{X}$ is connected and locally path connected, the
connected pointed covers $f:\hat{Y}\to\hat{X}$ correspond to the
subgroups $\pi_1(\hat{X})$, and vice versa.
\end{theorem}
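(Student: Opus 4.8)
The plan is to establish the correspondence in both directions and then show the two constructions are mutually inverse, with the \emph{lifting criterion} as the central tool. Throughout, connectedness and local path connectedness of $\hat{X}$ (and of the covers) guarantee that paths and their lifts behave well.

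First I would treat the forward direction. Given a connected pointed cover $f:\hat{Y}\to\hat{X}$, I would show that the induced homomorphism $f_*:\pi_1(\hat{Y})\to\pi_1(\hat{X})$ is injective, so that its image $H=f_*(\pi_1(\hat{Y}))$ is a well-defined subgroup. Injectivity is a consequence of the homotopy lifting property: a loop in $Y$ whose image is null-homotopic in $X$ lifts the bounding homotopy back up to $Y$, so the loop already bounds in $Y$. This assigns a subgroup to each connected pointed cover.

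Next I would construct the inverse assignment. Given a subgroup $H\subseteq\pi_1(\hat{X})$, I would build the cover $Y_H$ as a quotient of path space. Let $P$ be the set of paths $p:[0,1]\to X$ with $p(0)=x_0$, and declare $p\sim q$ when $p(1)=q(1)$ and the loop $p\cdot q^{-1}$ represents an element of $H$. The quotient $Y_H=P/\!\sim$ projects to $X$ by $[p]\mapsto p(1)$, with base point the class of the constant path. The delicate point is verifying that this projection is genuinely a covering map: one builds evenly covered neighbourhoods from path-connected opens $U$ over which every loop is null-homotopic in $X$, and checks that $f^{-1}(U)$ splits into the promised disjoint copies. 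This is where local path connectedness does its real work; in full generality one also needs $\hat{X}$ to be semi-locally simply connected, and in the present application $X$ is $\mathbb{C}$ minus finitely many points, which satisfies this.

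Finally I would show the two assignments are mutually inverse. Tracing the construction gives $f_*(\pi_1(\hat{Y}_H))=H$ directly. Conversely, given a cover $f:\hat{Y}\to\hat{X}$ with image subgroup $H$, the \emph{lifting criterion} — that a map $g:\hat{Z}\to\hat{X}$ from a connected, locally path connected $\hat{Z}$ lifts through $f$ precisely when $g_*(\pi_1(\hat{Z}))\subseteq f_*(\pi_1(\hat{Y}))$ — produces a canonical pointed map $\hat{Y}_H\to\hat{Y}$, and symmetrically a map back; unique lifting then forces these to be inverse pointed isomorphisms. The lifting criterion itself I would prove by lifting representative paths and invoking the homotopy lifting property to check the lift is well defined on homotopy classes. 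I expect the main obstacle to be the covering-map verification for $Y_H$, since that is the one step where the topological hypotheses on $\hat{X}$ are indispensable; everything else follows formally from path and homotopy lifting.
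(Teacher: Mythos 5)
The paper does not prove this theorem at all: it is quoted as a standard background result from covering space theory (the classification of connected pointed covers by subgroups of the fundamental group), with the section explicitly announcing that it will ``rely on some concepts and results from topology.'' Your outline is the standard textbook proof of that classification and is correct in all its essentials: injectivity of $f_*$ via homotopy lifting, construction of $Y_H$ as a quotient of the path space, and mutual inverseness via the lifting criterion together with uniqueness of lifts. You also correctly flag the one real issue with the statement as the paper gives it: the ``vice versa'' (existence) direction requires $\hat{X}$ to be semi-locally simply connected, a hypothesis the theorem omits but which is satisfied in the paper's application to $X=\mathbb{C}\setminus\{-1,+1\}$ with base point $0$. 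So your proposal supplies a proof where the paper supplies none, and even repairs the hypothesis; there is no gap.
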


This theorem applies in our situation, with $X =
\mathbb{C}\setminus\{-1, +1\}$ and $x_0=0$.  Each biased dessin $R$
produces a finite pointed cover $\hat{Y}_R\to \hat{X}$.  The relation
$R\to S$ on biased dessins, translated to topology, is equivalent to:
The pointed covers $\hat{Y}_R\to\hat{X}$ and $\hat{Y}_S\to\hat{X}$ are
such that (i)~there is a pointed cover map $\hat{Y}_R\to\hat{Y}_S$,
and (ii)~the composite $\hat{Y}_R\to\hat{Y}_S\to\hat{X}$ is
$\hat{Y}_R\to\hat{X}$.

From this, and standard results that produce a Belyi pair from a
finite cover of $X$, it follows that the relation $R\to S$ on biased
dessin (and hence the lattice structure) is Galois invariant
(Theorem~\ref{thm-ls-gi}).  Biased dessins (and maps between them)
correspond to finite pointed covers of $\mathbb{C}\setminus\{-1, +1\}$
(and maps between them).

\aaa[$\pi_1(\hat{X})$ and the lattice structure.]
By design, each Shabat polynomial gives a covering space (away from
$\vb$ and $\vw$), with a finite number of sheets.  The same goes for
Belyi pairs and $\PC{1}$ less three points.  Therefore, once bias has
provided base points, we can apply Theorem~\ref{thm-cpc-sg-pi}.

Suppose $H_R$ and $H_S$ are subgroups of $G=\pi_1(\hat{X})$.  In this
situation both $H_R\cap H_S$ and $\langle H_R,H_S\rangle$ (the
subgroup generated by $H_R$ and $H_S$) are subgroups of $G$.  This
puts an order lattice structure on the subgroups of $G$.  The
construction of the join $R\vee S$ of two biased dessins
(see~Definition~\ref{dfn-lat-join}) corresponds to $H_R\cap H_S$ in
$\pi_1(X, x_0)$, where $X = \mathbb{C}\setminus\{-1,+1\}$ and $x_0 = 0
\in X$.

\aaa[The partition triple.]
We have just, via covering spaces, outlined why the lattice structure
on $\mathcal{B}$ is Galois invariant.  This uses the global structure
of biased dessins $R$ and $S$ to define the relation $R\to S$.  If we
have $R\to S$ then there is also significant local structure that is
Galois invariant.  We will now outline how this produces from
$\mathcal{B}$ the decorated lattice $\mathcal{L}$.

Recall that $R$ has permutations $\alpha_R$ and $\beta_R$ acting on
the edges $E_R$ of $R$.  Recall also that each black vertex of $R$ is
an $\alpha_R$ orbit in $E_R$.  Thus, $\alpha$ partitions $E_R$ into
orbits, and hence produces a partition $p_{R,\alpha}$ of $n = \deg R$.
We can similarly define $p_{R,\beta}$ and $p_{R,\gamma}$, where
$\gamma = (\alpha\beta)^{-1}$ gives what is called the monodromy
around $\infty\in\PC{1}$.  The following is easy and already known.

\begin{proposition}
\label{prp-pt-gi}
The \defemph{partition triple}
$(p_{R,\alpha},p_{R,\beta},p_{R,\gamma})$ is a Galois invariant of
$R\in\mathcal{B}$.
\end{proposition}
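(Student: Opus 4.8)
The plan is to fix an automorphism $\sigma\in\Gamma$, write $\tilde R$ for the image of $R$ under the induced action on $\mathcal{B}$ (which acts by conjugating the coefficients of the defining map $f$, exactly as in the corollary that $\Gamma$ acts on $\mathcal{S}_n$), and to prove the three partitions are preserved \emph{separately}: $p_{\tilde R,\alpha}=p_{R,\alpha}$, $p_{\tilde R,\beta}=p_{R,\beta}$, and $p_{\tilde R,\gamma}=p_{R,\gamma}$. Since the partition triple is nothing but the ordered triple of these, invariance of each entry immediately gives invariance of the triple.

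First I would re-express each partition as local ramification data of $f$. Under the bijection between $\mathcal{B}$ and $\mathcal{D}$, an $\alpha$-orbit of size $k$ is a black vertex on which $k$ edges meet, and this corresponds to a point of the fibre $f^{-1}(-1)$ at which $f$ has local multiplicity $k$ (recall black vertices sit over $-1$, by Proposition~\ref{prp-uni-cc}). Thus $p_{R,\alpha}$ is precisely the partition of $n=\deg R$ recording the multiplicities of the points of $f^{-1}(-1)$; likewise $p_{R,\beta}$ records multiplicities over $+1$, and $p_{R,\gamma}$, with $\gamma=(\alpha\beta)^{-1}$, records the orders of the poles of $f$, i.e.\ the multiplicities over $\infty\in\PC{1}$. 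It therefore suffices to show that $\sigma$ preserves, over each of $-1$, $+1$, $\infty$, the multiset of local multiplicities.

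The key step is that a field automorphism preserves multiplicities of roots. Because $-1$ is fixed by $\sigma$, applying $\sigma$ to the factorisation $f(z)+1=a\prod_i(z-u_i)^{m_i}$, with the $u_i$ distinct, yields $\tilde f(z)+1=\tilde a\prod_i(z-\tilde u_i)^{m_i}$; as $\sigma$ is injective on $\Qbar$ the $\tilde u_i$ are again distinct, so this is the factorisation of $\tilde f(z)+1$ and the multiset $\{m_i\}$ is unchanged. This gives $p_{\tilde R,\alpha}=p_{R,\alpha}$, and the identical argument over $+1$ settles the $\beta$ case. (The total $n$ of each partition is trivially preserved, since $\deg\tilde f=\deg f$.)

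The hard part will be $\infty$, where the relevant numbers are pole orders rather than root multiplicities and, for a general marked Belyi pair, $f$ lives on a curve rather than on $\mathbb{C}$. In the Shabat-polynomial (tree) case this is immediate: a degree-$n$ polynomial has a single pole of order $n$, so $p_{R,\gamma}=(n)$ always and there is nothing to check. In general I would argue that $\sigma$ carries the Belyi pair $(X,f)$ to its conjugate $(\tilde X,\tilde f)$ as a morphism, so that the pole divisor of $\tilde f$ is the $\sigma$-conjugate of that of $f$ and its partition of orders is preserved by the very same multiplicity-preservation principle applied to the local expansion of $\tilde f$ at each pole; equivalently, one invokes that the ramification indices of a finite separable morphism are unchanged under base change by a field automorphism. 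The only genuinely delicate point is pinning down the dictionary that makes the combinatorial Galois action on $\mathcal{B}$ agree with coefficient-conjugation of $f$; once that is in hand the three cases above close the proof.
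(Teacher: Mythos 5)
The paper offers no proof of Proposition~\ref{prp-pt-gi}; it is stated with the remark that it is ``easy and already known,'' so there is no argument of the author's to compare yours against. Your proof is the standard one and is essentially correct: identifying $p_{R,\alpha}$, $p_{R,\beta}$, $p_{R,\gamma}$ with the multisets of local multiplicities of $f$ over $-1$, $+1$, $\infty$, and then observing that a field automorphism of $\Qbar$, applied to the factorisation $f(z)+1=a\prod_i(z-u_i)^{m_i}$, permutes the (still distinct) roots while leaving the exponents untouched, is exactly how this is proved in the literature. Your handling of $\infty$ is also right, and it is worth your observation that in the plane-tree case $p_{R,\gamma}=(n)$ is automatic since $\alpha\beta$ has a single orbit.

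The one point to be honest about is the one you already flag: the proposition is stated for $R\in\mathcal{B}$, but the paper defers the definition of marked Belyi pair, and of the Galois action on $\mathcal{B}$, to a later paper. Your argument implicitly assumes that action is coefficient-conjugation of the defining map (as in the corollary for $\mathcal{S}_n$) and that the combinatorial data $(\alpha,\beta,\gamma)$ of $R$ is read off from the fibres of $f$ via the stated bijection $\mathcal{B}_n\to\mathcal{D}_n$. That is certainly the intended dictionary, and with it in place your three separate invariance checks close the proof; but strictly speaking the proposition cannot be proved from what is in this paper alone, only modulo that deferred dictionary. This is a limitation of the paper's exposition rather than a gap in your reasoning.
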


The decoration that gives $\mathcal{L}$ is a relative form of the
partition triple.  First a review.  Let $D_1$ be the unique
single-edged biased dessin.  Given $R\to D_1$ we have marked Belyi
pair $M_R\to\PC{1}$.  Further, the partition $p_{R,\alpha}$ gives
Galois invariant information about the monodromy of $M_R\to\PC{1}$
around $-1\in\PC{1}$, and similarly for $p_{R,\beta}$ and
$p_{R,\gamma}$ around $+1$ and $\infty$ respectively.

Now suppose we have $R\to S\to D_1$.  Each say black vertex $v_r$ of
$R$ maps to a black vertex $v_s$ of $S$ (then to the the black vertex
of $v_b$ of $D_1$, which is what gives $v_r$ and $v_s$ their colour).
Each vertex $v_r$ of $R$ has a multiplicity $\mult v_r$ (number of
edges that meet $v_r$).  The numbers $\mult v_r$, for all $v_r$
mapping to $\vb$, give the partion $p_{R,\alpha}$.

The vertex $v_r$ also maps to a vertex $v_s$ on $S$.  This gives
additional information to record.

\aaa[\label{s-bd-dl}Decorating the lattice.]
Let $\mathcal{L}'$ be $\mathcal{B}$ considered as an abstract lattice,
whose elements we will call \emph{nodes}.  Each node $R$ is secretly a
biased dessin, but for Galois purposes we are not allowed to look
inside $R$ and see the biased dessin.  The underlying biased dessin is
without Galois significance, which is why we keep it secret.  However,
some information does emerge.

The \emph{decoration} of $\mathcal{L}'$ consists of: (1)~For each node
$R$ of $\mathcal{L}'$ a finite set $V_R$, called the \emph{vertices}
of $R$. (2)~A map $\mult:V_R\to\mathbb{N}^+ =\{n>0\}$.  (3)~Whenever
$R\to S$, which now means the abstract partial order on
$\mathcal{L}'$, there is a map $V_R\to V_S$.

\begin{definition}
The \defemph{decorated lattice of biased dessins} $\mathcal{L}$ is
$\mathcal{L}'$ decorated as above.
\end{definition}

We consider two decorations of a lattice to be equal if they are the
same after relabelling, or in other words are related by bijections on
the vertex sets $V_R$.  Our decoration of $\mathcal{L}'$ has special
properties, such as (i)~the maps $V_R\to V_S$ commute, and (ii)~if
$v_r\mapsto v_s$ under $V_R\to V_S$ then $\mult v_r$ divides $\mult
v_s$.  We don't need these properties in this paper.  But we do care
about automorphisms.

\begin{definition}
An \defemph{automorphism $\psi$ of $\mathcal{L}$} consists of a
lattice isomorphism $\psi:\mathcal{L}'\to\mathcal{L}'$, together with
maps $\psi:V_R\to V_{\psi(R)}$, such that (i)~the composition $V_R\to
V_{\psi(R)}\rightarrowmult\mathbb{N}^+$ is equal to
$V_R\rightarrowmult\mathbb{N}^+$, and (ii)~if $R\to S$ then the
compositions $V_R\to V_{\psi(R)}\to V_{\psi(S)}$ and $V_R\to
V_S\to V_{\psi(S)}$ are equal.
\end{definition}

Recall that $\mathcal{L}'$ is an abstract lattice, each of whose nodes
has secretly associated with it a biased dessin.  Suppose $\psi$ is
automorphism of $\mathcal{L}$ and $R$ is a node of $\mathcal{L}$.  Let
$U$ and $\psi(U)$ be the biased dessins secretly associated with $R$
and $\psi(R)$.  It is not required that $\psi$ induce a bijection
between the edges of $U$ and those of $\psi(U)$.  Recall that only two
elements of $\Gamma$ act continuously on $\Qbar\subset\mathbb{C}$
(see~\S\ref{s-i-an}).  This might make it impossible to construct a
bijection on the edges.

What $\psi$ must do is preserve certain geometric relations between
elements of $\mathcal{B}$.  The lattice isomorphism
$\psi:\mathcal{L}'\to\mathcal{L}'$ comes from global properties.  The
$V_R$, $\mult:V_R\to\mathbb{N}^+$ and $V_R\to V_S$ come from local
geometric properties.

\begin{notation}
$\Gamma' = \Aut(\mathcal{L})$, the automorphism group of
  $\mathcal{L}$.
\end{notation}

The bottom element $D_1$ of $\mathcal{L}$ has three vertices, which we
denote by $\vb$, $\vw$ and $v_\infty$.  Each has multiplicity
one. Given a node $R$ of $\mathcal{L}$, the map $V_R\to V_{D_1} =
\{\vb,\vw,v_\infty\}$ partitions $V_R$ into black, white and
at-infinity vertices.  The map $V_R\to\mathbb{N}^+$, restricted to
each of these subsets, then gives the partition triple.

Each permutation of $\vb,\vw,v_\infty$ induces an automorphism of
$\mathcal{L}$.  The following, given Theorem~\ref{thm-bt-bij}, is not
hard.  Its proof will be given in \cite{jfine-dlbd}.

\begin{notation}
$\Gamma'_0$ is the subgroup of $\Gamma'$ that fixes $V_{D_1}$.
\end{notation}

\begin{theorem}
The absolute Galois group $\Gamma$ is a subgroup of $\Gamma'_0$.
\end{theorem}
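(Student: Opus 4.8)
The plan is to construct a group homomorphism $\Gamma \to \Gamma'_0$ and show it is injective. Fix $\sigma \in \Gamma$. Since $\Gamma$ acts on $\mathcal{B}$, the automorphism $\sigma$ permutes the nodes of $\mathcal{L}' = \mathcal{B}$, and by Theorem~\ref{thm-ls-gi} this permutation preserves the relation $R \to S$ and the join; hence $\sigma$ induces a lattice isomorphism $\psi := \psi_\sigma : \mathcal{L}' \to \mathcal{L}'$. This supplies the global, order-theoretic half of the desired automorphism, and it remains to produce the vertex maps and verify the decoration axioms.

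First I would build the maps $\psi : V_R \to V_{\psi(R)}$ from the algebraic side, using the marked Belyi pair. Via $M_R \to \PC{1}$, the vertex set $V_R$ is identified with the points of $M_R$ lying over $\{\vb, \vw, v_\infty\} = \{-1, +1, \infty\}$, the colour of a vertex being its image in $V_{D_1}$ and its multiplicity $\mult v$ being the ramification index of $M_R \to \PC{1}$ at $v$. The automorphism $\sigma$ carries $M_R$ to the conjugate pair $M_{\psi(R)} = M_R^\sigma$, and Galois conjugation of points gives a bijection from the fibre of $M_R$ over $\{-1,+1,\infty\}$ to that of $M_{\psi(R)}$. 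Since $-1, +1, \infty$ are fixed by $\sigma$, a point over $-1$ maps to a point over $-1$, and likewise for $+1$ and $\infty$; this defines $\psi : V_R \to V_{\psi(R)}$ and shows at once that it respects the partition of $V_R$ into black, white and at-infinity vertices, i.e. commutes with the maps to $V_{D_1}$. Applied to $R = D_1$ this fixes $V_{D_1}$ pointwise, placing $\psi_\sigma$ in $\Gamma'_0$.

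Next I would verify the two automorphism axioms. Condition (i), preservation of multiplicity, holds because ramification indices are invariant under Galois conjugation, so $\mult(\psi(v)) = \mult v$. Condition (ii), the agreement of $V_R \to V_{\psi(R)} \to V_{\psi(S)}$ with $V_R \to V_S \to V_{\psi(S)}$ whenever $R \to S$, follows from functoriality: a morphism $R \to S$ corresponds to a cover $M_R \to M_S$ over $\PC{1}$, the vertex map $V_R \to V_S$ is the induced map on fibres, and conjugation by $\sigma$ is compatible with the formation of $M_R \to M_S$ and its effect on points. That $\sigma \mapsto \psi_\sigma$ is a homomorphism is then purely formal, and its injectivity is immediate from the stated faithfulness of the action of $\Gamma$ on $\mathcal{B} = \mathcal{L}'$. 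Together these give the inclusion $\Gamma \hookrightarrow \Gamma'_0$.

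The main obstacle I anticipate is the honest construction of the vertex maps, rather than the order-theoretic part (which is Theorem~\ref{thm-ls-gi}) or the bookkeeping of the axioms. The paper stresses that $\psi_\sigma$ need not arise from any bijection of edges, because a general $\sigma \in \Gamma$ acts discontinuously on $\Qbar \subset \mathbb{C}$; so the vertex maps cannot be read off combinatorially and must be extracted from the geometry of the marked Belyi pairs and the Galois action on their fibres. Pinning this down — in particular, that the induced map is well defined on the combinatorially-described $V_R$ (through the bijection of Theorem~\ref{thm-bt-bij} and its Belyi-pair extension) and genuinely functorial in $R \to S$ — is where the algebraic-geometric content of the bridge, rather than the combinatorics alone, is indispensable.
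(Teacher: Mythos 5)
The paper itself gives no proof of this theorem --- it states only that the result ``given Theorem~\ref{thm-bt-bij}, is not hard'' and defers the argument to \cite{jfine-dlbd} --- so there is no in-paper proof to compare yours against. Your proposal is consistent with, and fleshes out, exactly the strategy the paper sketches in \S\ref{s-bd-cs-pi}--\S\ref{s-bd-dl}: the lattice isomorphism from the Galois invariance of $R\to S$ (Theorem~\ref{thm-ls-gi}), the vertex maps and multiplicities from the fibres and ramification indices of the marked Belyi pair over $\{-1,+1,\infty\}$, colour preservation from the rationality of those three points, and injectivity from the stated faithfulness of $\Gamma$ on $\mathcal{B}$. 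You also correctly identify the one genuinely delicate point, namely that the combinatorial $V_R$ must be matched with the geometric fibre through the bridge before conjugation can act on it; that is precisely the content the paper postpones.
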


At present, there is not evidence or a proof strategy for:

\begin{conjecture}
$\Gamma = \Gamma'_0$.
\end{conjecture}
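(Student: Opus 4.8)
The containment $\Gamma \subseteq \Gamma'_0$ is exactly the preceding theorem, so for the equality it remains to prove the reverse inclusion $\Gamma'_0 \subseteq \Gamma$: every automorphism $\psi$ of $\mathcal{L}$ that fixes $V_{D_1}$ is induced by a field automorphism of $\Qbar$. The plan is, given such a $\psi$, to build a candidate $\sigma_\psi \in \Gamma$ and then verify that $\iota(\sigma_\psi) = \psi$ under the injection $\iota : \Gamma \hookrightarrow \Gamma'_0$ supplied by the previous theorem. Since $\iota$ is already known to be injective, proving it surjective completes the proof of the conjecture.

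First I would reduce the construction of $\sigma_\psi$ to a generating set. The coefficients of all biased Shabat polynomials generate $\Qbar$ over $\mathbb{Q}$: if $K$ is the subfield they generate, then any $\sigma \in \mathrm{Gal}(\Qbar/K)$ fixes every coefficient of every $f \in \mathcal{S}$, hence acts trivially on $\mathcal{S}$, hence is the identity by the faithfulness of the $\Gamma$-action (the biased analogue of Theorem~\ref{thm-ubs-ffl}); since $\Qbar/K$ is Galois this forces $K = \Qbar$. Thus a field automorphism is determined by its effect on these coefficients, and by the completeness of the minimal polynomial (Proposition~\ref{prp-mp-cgi}) it suffices to assign to each coefficient a root of the same minimal polynomial in a manner consistent with all field relations.

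Next I would use $\psi$ to prescribe those assignments. Each node $R$ of $\mathcal{L}$ secretly carries a biased Shabat polynomial (equivalently a marked Belyi pair) $f_R$, and $\psi$ carries $R$ to $\psi(R)$ with polynomial $f_{\psi(R)}$; I want $\sigma_\psi$ to send the coefficient system of $f_R$ to that of $f_{\psi(R)}$. The decoration is what should make this canonical: running over all $S$ with $R \to S \to D_1$, the partition triples $p_{R,\alpha}, p_{R,\beta}, p_{R,\gamma}$ together with the relative vertex maps $V_R \to V_S$ (Proposition~\ref{prp-pt-gi} and the discussion of \S\ref{s-bd-dl}) should pin down $f_R$ among its Galois conjugates finely enough to single out which conjugate $f_{\psi(R)}$ must be. To recover the \emph{arithmetic} — that $\sigma_\psi$ respects sums and products of coefficients coming from \emph{different} nodes — I would exploit the join: since $R \vee S$ is the component of the fibre product of the two covers (Definition~\ref{dfn-lat-join} and the identification with $H_R \cap H_S$ in $\pi_1(\hat{X})$), the coefficients of $f_{R \vee S}$ lie in the compositum of the fields of $f_R$ and $f_S$, so compatibility of $\psi$ with $\vee$ should force $\sigma_\psi$ to respect the algebraic relations linking these coefficients.

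The hard part — and the reason the paper records no current proof strategy — is precisely the \textbf{fullness/well-definedness} step: showing that the abstract decorated lattice is a \emph{complete} invariant, rich enough that every $\psi \in \Gamma'_0$ extends to a well-defined field homomorphism and that no ``exotic'' automorphism of $\mathcal{L}$ escapes $\Gamma$. Because one is forbidden to look inside a node, the only leverage is the lattice order, the join, and the local partition-triple data, and it is far from clear that these determine the field operations. This is a characterization problem for the image of $\Gamma$ of the same flavour as describing the image of $\Gamma$ in $\mathrm{Out}(\widehat{F_2})$ through Grothendieck--Teichm\"uller constraints, which remains open; I would expect progress to require either new rigidity relations among the decorations of $R$, $S$ and $R \vee S$, or a compatible inverse-limit reconstruction that recovers $\mathrm{Gal}(K/\mathbb{Q})$ together with its action at each finite level and then glues these levels across the tower.
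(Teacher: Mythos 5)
The statement you have been asked to prove is labelled a \emph{conjecture} in the paper, and the paper says explicitly, immediately before it, that ``at present, there is not evidence or a proof strategy'' for it. So there is no proof in the paper to compare against, and your proposal does not supply one either. Your opening reduction is sound and matches the paper's own framing: the containment $\Gamma\subseteq\Gamma'_0$ is the preceding theorem, the content is surjectivity, and your observation that the coefficients appearing in $\mathcal{S}$ generate $\Qbar$ (via faithfulness of the action on $\mathcal{S}$) is essentially the paper's benefit~(6) in the conclusion, where the author notes that $\psi\in\Gamma'_0$ comes from $\Gamma$ exactly when the induced map $\psi:\mathcal{A}\to\mathcal{A}$ respects all algebraic relations among elements of $\mathcal{A}$. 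But from that point on your text is a restatement of the problem rather than a solution, and your final paragraph concedes as much: the ``fullness/well-definedness step'' you defer is the entire conjecture.

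Beyond incompleteness, one step in your middle paragraph would fail in principle. You propose that the decoration --- partition triples and the relative vertex maps $V_R\to V_S$ --- should ``pin down $f_R$ among its Galois conjugates finely enough to single out which conjugate $f_{\psi(R)}$ must be.'' The decoration is Galois \emph{invariant} (Proposition~\ref{prp-pt-gi} and its relative form in \S\ref{s-bd-dl}), hence constant on Galois orbits; by definition no such data can distinguish one conjugate from another, which is precisely why the author keeps the dessin inside each node ``secret.'' Which conjugate sits at $\psi(R)$ is determined by $\psi$ itself; the open question is whether the resulting node-by-node assignment of coefficient systems is forced to be additive and multiplicative across nodes. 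Your join/compositum remark points in the right direction --- it mirrors the paper's identification of $R\vee S$ with $H_R\cap H_S$ in $\pi_1(\hat{X})$ --- but you give no mechanism by which compatibility with $\vee$ yields the field axioms for $\sigma_\psi$, and the paper offers none. The honest verdict is that your proposal is a reasonable framing of the conjecture, correctly locating its difficulty alongside Grothendieck--Teichm\"uller-type characterisation problems, but it establishes nothing beyond the already-known containment $\Gamma\subseteq\Gamma'_0$.
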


\section{Conclusion}

\aaa[Summary.]
We have seen that adding bias to dessins brings many benefits.
(1)~Galois invariants can be defined directly from biased Shabat
polynomials, say via minimal polynomials.  (2)~Biased dessins have a
Galois invariant lattice structure, which can be use to help build a
tower of Galois invariants.  (3)~Biased plane trees are counted by the
Catalan numbers, which brings connections to many other parts of
mathematics. (4)~The decorated lattice $\mathcal{L}$ of biased dessins
is the ground for the definition of new Galois invariants, which
generalise the partition triple. (5)~The simply defined subgroup
$\Gamma'_0$ of $\Aut(\mathcal{L})$ contains, and might equal, the
absolute Galois group $\Gamma$.

To this list we add: (6)~Each $\psi\in\Gamma'_0$ induces a bijection
$\psi:\mathcal{A}\to\mathcal{A}$, where $\mathcal{A}\subset\Qbar$ are
the coefficients that appear in $\mathcal{S}$.  (7)~We have additional
structures and conjectures that can be explored using computer
calculations.  The purely combinatorial calculations might be easier.

Benefit~(6) is importantant because $\psi\in\Gamma'_0$ will induce,
and hence come from, a $\psi\in\Gamma$ just in case
$\psi:\mathcal{A}\to\mathcal{A}$ respects all algebraic relations that
exist between the elements of $\mathcal{A}$.  This makes $\mathcal{A}$
a potentially interesting object of study.

\aaa[Two cultures.]
The minimal polynomial and the partition triple are both Galois
invariants, but very different in character.  The one is algebraic,
the other combinatorial.  They also apply to different types of
object, namely elements of $\Qbar$ and $\mathcal{B}$ respectively.
Thus, each type of object has its own type of Galois invariant.

The introduction of bias destroys this dichotomy.  Each biased Shabat
polynomial is, via the bridge, a biased plane tree and vice versa.  As
a biased Shabat polynomial it has `minimal polynomial' style
invariants.  As a plane tree it has `partition triple' style
invariants.

Suppose we have a complete set $\mathcal{X}$ of Galois invariants on,
say, the algebraic number side.  This means that \emph{any} Galois
invariant on the dessins side can be expressed using the $\mathcal{X}$
invariants.  The bridge will become more useful if we can produce sets
of invariants $\mathcal{X}$ and $\mathcal{Y}$, one at each end the
bridge, that are \emph{aligned}.  By this I mean, for example, that
$\mathcal{X}(f)$ and $\mathcal{Y}(T_f)$ are linear functions of each
other.  The author hopes to discuss this further in~\cite{jfine-gibd}.

\aaa[\label{s-cnc-ubgi}Unbiased Galois invariants.]
We have seen that biased Shabat polynomials and plane trees have many
Galois invariants, coming from the coefficients of $f$ and the lattice
structure on $\mathcal{B}$ respectively.  Suppose, however, that our
situation requires the study of unbiased objects.  What now?

Formal sums allow Galois invariants to descend, solving this problem.

\begin{proposition}
If $h$ is a biased Galois invariant then
\[
h_\Sigma(X) = \sum\nolimits_{Y'=X} [h(Y)]
\]
is an unbiased Galois invariant.  Here $Y'$ means $Y$ without its
bias.
\end{proposition}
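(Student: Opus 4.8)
The plan is to reduce everything to the Galois-equivariance of the forget-bias map $\pi\colon\mathcal{B}\to\mathcal{D}'$, $Y\mapsto Y'$. Recall that $\Gamma$ acts on the biased objects $\mathcal{B}$ and, via Theorem~\ref{thm-ubs-ffl} and the corresponding bijection, on the unbiased objects; moreover $\pi$ is consistent with these two actions (exactly as was noted for the map $\mathcal{S}\to\mathcal{S}'$ in~\S\ref{s-sh-bsp}), so that $(\tilde Y)' = \widetilde{Y'}$ for every $Y\in\mathcal{B}$ and every $u\mapsto\tilde u$ in $\Gamma$. Recall also that ``$h$ is a Galois invariant'' means literally $h(\tilde Y)=h(Y)$, as opposed to merely covariant (cf.\ Proposition~\ref{prp-ubt-to-bt}). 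Since an unbiased $X$ has finitely many edges and hence finitely many biases, the fibre $\pi^{-1}(X)=\{Y:Y'=X\}$ is finite, so $h_\Sigma(X)=\sum_{Y'=X}[h(Y)]$ is a well-defined formal sum; the role of the formal sum is precisely that the values $h(Y)$ lie in a set $\mathcal{V}$ with no addition, and $[\,\cdot\,]$ lets us record the whole multiset of fibre values at once. The goal is then to show $h_\Sigma(\tilde X)=h_\Sigma(X)$ for every $X$ and every element of $\Gamma$.

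The key step is a reindexing of the sum defining $h_\Sigma(\tilde X)$. Fix $u\mapsto\tilde u$ in $\Gamma$ and write $\sigma$ for the induced bijection $Y\mapsto\tilde Y$ of $\mathcal{B}$. Using $(\tilde Y)'=\widetilde{Y'}$ together with the injectivity of $\sigma$ on the unbiased side, one checks that $Y'=X$ if and only if $(\tilde Y)'=\tilde X$; hence $\sigma$ restricts to a bijection $\pi^{-1}(X)\to\pi^{-1}(\tilde X)$. Substituting $Z=\tilde Y$ in the sum over $\pi^{-1}(\tilde X)$ therefore gives
\[
h_\Sigma(\tilde X) \;=\; \sum\nolimits_{Z'=\tilde X}[h(Z)] \;=\; \sum\nolimits_{Y'=X}[h(\tilde Y)].
\]

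Finally I would apply the hypothesis that $h$ is a Galois invariant, so that $h(\tilde Y)=h(Y)$ term by term, yielding $\sum_{Y'=X}[h(\tilde Y)]=\sum_{Y'=X}[h(Y)]=h_\Sigma(X)$. This is the required equality, and it holds for every element of $\Gamma$, so $h_\Sigma$ is an unbiased Galois invariant. I do not expect a serious obstacle once the equivariance of $\pi$ is in hand: the entire content is the identity $(\tilde Y)'=\widetilde{Y'}$, which turns the claim into a relabelling of a finite sum. The only point deserving genuine care is to state precisely the $\Gamma$-action on the unbiased objects and to confirm that $\pi$ intertwines it with the action on $\mathcal{B}$; the subsequent manipulation of formal sums is pure bookkeeping.
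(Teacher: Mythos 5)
Your argument is correct and is exactly the paper's proof, expanded: the paper's entire proof is the observation that the fibre $S_X=\{Y\in\mathcal{B}\mid Y'=X\}$ is finite and Galois covariant, which is precisely your statement that $\sigma$ restricts to a bijection $\pi^{-1}(X)\to\pi^{-1}(\tilde X)$; the reindexing of the formal sum and the termwise use of $h(\tilde Y)=h(Y)$ are the bookkeeping the paper leaves implicit.
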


\begin{proof}
The set $S_X = \{Y\in\mathcal{B} | Y'=X\}$ is finite, and Galois
covariant.
\end{proof}

This process can be thought of as \emph{summing over the the bias} or
\emph{integrating over the fibre}.

\aaa[\label{s-cnc-cr}Closing remarks.]
%
%
We have just seen how biased dessins naturally arise in the study
unbiased dessins.  We give the last word to Alexander Grothendieck,
who seems to have anticipated this (see \cite{gr-edp}, p5 of AG's
manuscript):

\begin{quotation}
[L]e gens s'obstinent encore, en calculant avec des groups
fondamentaux, \`a fixer un seul point base, plut\^ot que d'en chosir
astucieusement tout un paquet qui soit invariant par les sym\'etries
de la situation [\ldots]
\end{quotation}

Or in English ~\cite{gr-sp}:
\begin{quotation}
[P]eople still obstinately persist, when calculating with fundamental
groups, in fixing a single base point, instead of cleverly choosing a
whole packet of points which is invariant under the symmetries of the
situation [\ldots]
\end{quotation}

\medskip
\rightline{Email: \texttt{jfine2358@gmail.com}}

\section*{Postscript (14 February 2018)}

Since this, the \emph{old paper}, the author wrote \emph{The algebra
  of balanced dessins} (arXiv:1802.04531).
The \emph{new paper} gives a key definition, for a new approach to
dessins and algebraic numbers. Its distant goal is to construct from
each dessin $D$ an algebraic number $\eta_D$, in a systematic and
useful way. The new paper defines the algebra of balanced
dessins. This algebra is generated by formal sums $\psi_D$ of
dessins. Each $\psi_D$ is intended to be intermediate between $D$ and
$\eta_D$.

The old paper contains the striking result, that every biased plane
tree determines several algebraic numbers. Unfortunately, these
algebraic numbers seem be obscure and unhelpful. We can't in practice
add or multiply them. They arise from analysis rather than algebra.

The old paper is also troubled by the problem of constructing, from a
suitable automorphim of the lattice of biased dessins, an automorphism
of the algebraic numbers. We seem to need something like this, to give
a combinatorial definition of the absolute Galois group.

Thinking on these difficulties, the author was led to the idea that it
would be very nice indeed if from \emph{any} dessin one could usefully
construct a single \emph{useful} algebraic number. The biased plane
tree result showed that much of the difficulty lay in multiplying the
algebraic numbers, that we wished to construct from the dessins.

Once one sees the utility of having a way to multiply the dessins
themselves, a potential way forward shows itself. Recall that the join
of two biased dessins is defined as an orbit in their Cartesian
product. The starting point of the new paper is precisely this product
on dessins.

The old paper was focussed on defining new invariants of dessins. This
was fairly widely felt to be important. However, if the program
inherent in the new approach succeeds, then every dessin has a very
nice invariant. It is the minimal polynomial of the algebraic number
constructed from that dessin.

The minimal polynomial of an algebraic number is, in some sense, its
universal and very best Galois invariant. Therefore, if the new
approach succeeds, then it also in some sense defines a universal
Galois invariant of dessins. (Computing this invariant is another
matter.)

The goals of the old paper were broad, and somewhat confused. Its `in
preparation' work might not appear. The new paper is the result of a
clarification and narrowing of goals.

\newpage

\vspace*{-1.3in}
\hspace*{-.75in}\includegraphics{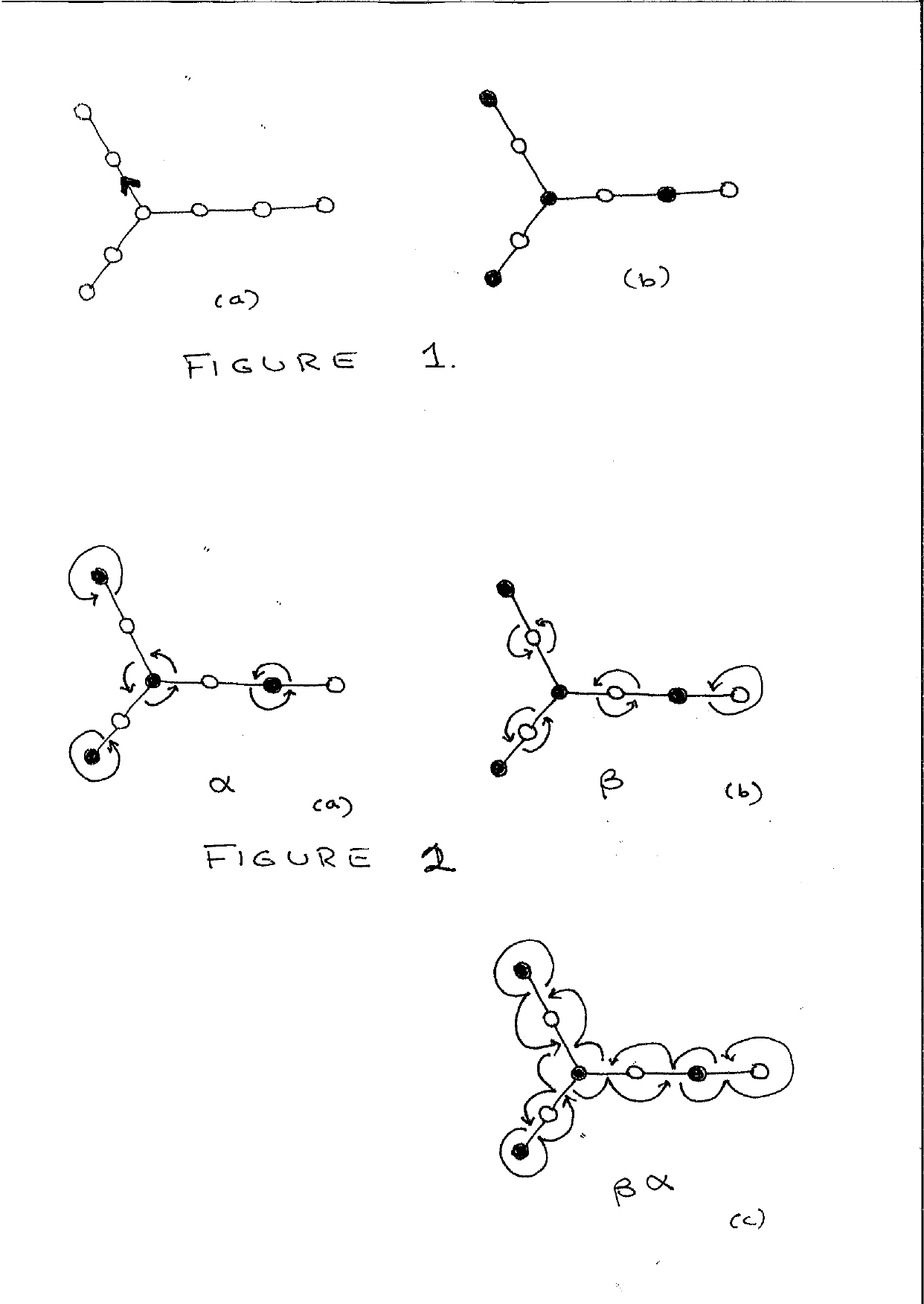}

\end{document}